\documentclass[12pt,a4paper]{article}
\usepackage{amssymb,amsmath}
\usepackage{hyperref}
\usepackage{amsthm}
\usepackage{ulem}
\newtheorem*{theorem}{Theorem}
\usepackage{chngcntr}
\usepackage{color}
\usepackage{graphicx}
\usepackage[dvipsnames]{xcolor}
\usepackage[symbol]{footmisc}
\usepackage[english,ngerman]{babel}
\usepackage[top=2.5cm,bottom=2.5cm,left=2.5cm,right=2.5cm]{geometry}

\begin{document}
%\color{red}
\begin{center}
\LARGE{\textbf{A fixed-point iteration method for the
\\number Pi with arbitrary odd order of
\\convergence based on the sine function}}
\[\]
\[\]
\large
{Alois Schiessl}
\footnote[1]{{University of Regensburg}\,\;E-Mail: \texttt{aloisschiessl@posteo.de}}
\selectlanguage{ngerman}
%\centerline{ }
\end{center}
%\centerline{}
\selectlanguage{english}
\centerline{***********************}
\centerline{\it In celebration of Pi Day}
\centerline{\it 3 - 14 - 2026}
\centerline{***********************}
\begin{abstract}
In this paper, we present a fixed point method for high-precision computation of number $\pi$ based on the sine function. Let $P\in \mathbb{N}$. We define the function:
\[
S\left(x\right)
=x+\sum_{k=1}^{P}\left(\prod_{\ell=1}^{k-1}\frac {2\,\ell-1}{2\,\ell}\right)\frac{\sin\left(x\right)^{2\,k-1}}{2\,k-1}\,.
\]
For every initial value $x_0$ sufficiently close to $\pi$, the sequence
\[x_{n+1}=x_n+S\left(x_{n}\right)\;;\,n=0,1,\ldots\]
is converging to $\pi$ with order of convergence exactly $\left(2\,P+1\right)$. The computational tests we performed demonstrate the efficiency of the method.
\selectlanguage{ngerman}
\[\]
\[\textbf{Zusammenfassung}\]
In dieser Abhandlung stellen wir ein Fixpunktverfahren zur Berechnung der Kreiszahl $\pi$ auf Basis der sinus Funktion vor. Es sei $P\in \mathbb{N}$. Wir definieren die Funktion:
\[
S\left(x\right)
=x+\sum_{k=1}^{P}\left(\prod_{\ell=1}^{k-1}\frac {2\,\ell-1}{2\,\ell}\right)\frac{\sin\left(x\right)^{2\,k-1}}{2\,k-1}\;.
\]
Für jeden Startwert $x_0$ hinreichend nahe bei $\pi$ konvergiert die Folge
\[x_{n+1}=x_n+S\left(x_{n}\right)\;;\,n=0,1,\ldots\]
gegen $\pi$ mit Konvergenzordnung genau $\left(2\,P+1\right)$. Anhand von praktischen Berechnungen zeigen wir die Effizienz des Verfahrens.
\[\text{Deutsche Version ab Seite 19}\]
\end{abstract}
\section{Introduction and main result}
We start with the $\arcsin$ power series \cite{HJ}
\[
\arcsin \left(t\right)
=t+\frac{1}{2}\cdot\frac{t^3}{3}+
\frac{1\cdot 3}{2\cdot 4}\cdot\frac{t^5}{5}+\ldots
=\sum_{k=1}^{\infty }\left(\prod_{\ell=1}^{k-1}\frac {2\,\ell-1}{2\,\ell}\right)\frac{\,t^{\,2\,k-1}}{2\,k-1}\,.
\]
The power series is absolutely convergent for $\left|t\right|\le 1$. We only use the first $P$ terms: 
\[
t+\frac{1}{2}\cdot\frac{t^3}{3}+
\frac{1\cdot 3}{2\cdot 4}\cdot\frac{t^5}{5}+\ldots
+\left(\prod_{\ell=1}^{P-1}\frac {2\,\ell-1}{2\,\ell}\right)\frac{t^{2\,P-1}}{2\,P-1}
=\sum_{k=1}^{P}\left(\prod_{\ell=1}^{k-1}\frac {2\,\ell-1}{2\,\ell}\right)\frac{t^{2\,k-1}}{2\,k-1}\,.
\]
Now substituting $t=\sin\left(x\right)$ leads to the finite sum
\[
\sum_{k=1}^{P}\left(\prod_{\ell=1}^{k-1}\frac {2\,\ell-1}{2\,\ell}\right)\frac{\sin\left(x\right)^{2\,k-1}}{2\,k-1}\,.
\]
With this sum we define a fixed point function that enables the calculation of $\pi$ with arbitrary odd order of convergence.
$ \\ $
$ \\ $
Now let's formulate our statement:
\begin{theorem}
$ \\ $
Let $P\in{\mathbb{N}}$. We define the function
\[S: \mathbb{R} \to \mathbb{R}\;;\]
\[x\mapsto S\left(x\right)\;;\]
\begin{align*}
S\left(x\right)
=x+\sum_{k=1}^{P}\left(\prod_{\ell=1}^{k-1}\frac{2\,\ell-1}{2\,\ell}\right)\frac{\sin\left(x\right)^{2\,k-1}}{2\,k-1}\,.
\end{align*}
Then, the following statements hold:
\begin{itemize}
\item[(a)]
$S\left(x\right)$ is a fixed point function with $\pi$ as fixed point, that means $S\left(\pi\right)=\pi\,.$
\item[(b)]
For the derivatives at the fixed point $\pi$, the following applies:
\begin{align*}
S^{\,\left(k\right)}\left(\pi\right)=\left\{
\begin{aligned}
0\qquad\qquad\qquad\;\; 1\leq &k\le 2\,P
\\
\\
\left(\prod_{\ell=1}^{P}\left(2\,\ell-1\right)\right)^{2}
\qquad\quad k=&2\,P+1
\\ 
\end{aligned} 
\right.
\end{align*}
\item[(c)] For every initial value $x_0$ sufficiently close to $\pi$, the sequence
\begin{align*}
x_{n+1}&=S\left(x_{n}\right)\,,\;n=0,1,\,\ldots
\end{align*}
is converging to $\pi$ with order of convergence exactly $\left(2\,P+1\right)\,.$
\end{itemize}
\end{theorem}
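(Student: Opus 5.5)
The plan is to compare $S$ with the exact identity coming from the full $\arcsin$ series instead of differentiating the finite sum directly. Write $c_k=\left(\prod_{\ell=1}^{k-1}\frac{2\ell-1}{2\ell}\right)\frac{1}{2k-1}$, so that $\arcsin(t)=\sum_{k\ge1}c_k t^{2k-1}$ for $|t|\le1$. For $x\in[\pi/2,3\pi/2]$ we have $\arcsin(\sin x)=\pi-x$, since $\pi-x\in[-\pi/2,\pi/2]$ and $\sin(\pi-x)=\sin x$. Hence on this interval
\[
S(x)=x+\arcsin(\sin x)-R_P(\sin x)=\pi-R_P(\sin x),\qquad R_P(t)=\sum_{k=P+1}^{\infty}c_k t^{2k-1}.
\]
Part (a) is then immediate, because $\sin\pi=0$ gives $S(\pi)=\pi$. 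One can also check it directly from the definition of $S$.

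For part (b) I will expand around $\pi$. Put $h=x-\pi$, so that $\sin x=-\sin h=-h+O(h^3)$ is analytic in $h$. The remainder $R_P(t)=t^{2P+1}\bigl(c_{P+1}+O(t^2)\bigr)$ is analytic for $|t|<1$. Therefore, for $|h|$ small,
\[
S(\pi+h)-\pi=-R_P(-\sin h)=c_{P+1}\,h^{2P+1}+O\!\left(h^{2P+3}\right),
\]
and this is a convergent power series in $h$. Comparing with the Taylor series $S(\pi+h)=\sum_k S^{(k)}(\pi)h^k/k!$, I read off $S^{(k)}(\pi)=0$ for $1\le k\le 2P$ and $S^{(2P+1)}(\pi)=(2P+1)!\,c_{P+1}$. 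The remaining step is arithmetic. Since $(2P+1)!/(2P+1)=(2P)!$ and $(2P)!=\prod_{\ell=1}^{P}(2\ell)\cdot\prod_{\ell=1}^{P}(2\ell-1)$, we get
\[
(2P+1)!\,c_{P+1}=(2P)!\prod_{\ell=1}^{P}\frac{2\ell-1}{2\ell}=\Bigl(\prod_{\ell=1}^{P}(2\ell-1)\Bigr)^{2}.
\]

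For part (c) I will use the representation $S(x)-\pi=(x-\pi)^{2P+1}g(x)$, where $g$ is analytic near $\pi$ and $g(\pi)=c_{P+1}>0$. Choose $\delta\in(0,\pi/2)$ with $|g|\le M$ on $[\pi-\delta,\pi+\delta]$ and $M\delta^{2P}<1$. Then $|x_{n+1}-\pi|\le M\delta^{2P}|x_n-\pi|$, so the interval is invariant and $x_n\to\pi$. Moreover, if $x_n\ne\pi$,
\[
\frac{|x_{n+1}-\pi|}{|x_n-\pi|^{2P+1}}=|g(x_n)|\to c_{P+1}\ne0,
\]
which gives order exactly $2P+1$ with asymptotic error constant $\bigl(\prod_{\ell=1}^{P}(2\ell-1)\bigr)^{2}/(2P+1)!$. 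If some $x_n$ equals $\pi$ exactly, the sequence is constant from then on, and that degenerate case is noted separately.

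The only delicate point is justifying $\arcsin(\sin x)=\pi-x$ together with the analyticity of $R_P\circ\sin$ near $\pi$. This needs $|\sin x|<1$, which is why I restrict to a neighbourhood of $\pi$ well inside $(\pi/2,3\pi/2)$. Everything else is coefficient bookkeeping.
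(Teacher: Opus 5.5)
Your proof is correct, and for part (b) it takes a genuinely different route from the paper. The paper's proof never uses the infinite $\arcsin$ series. It differentiates the finite sum twice, replaces $\cos^2 x$ by $1-\sin^2 x$, and a telescoping gives the global identity $S''(x)=-(2P-1)\bigl(\prod_{\ell=1}^{P-1}\frac{2\ell-1}{2\ell}\bigr)\sin(x)^{2P-1}$, valid for every real $x$. It then gets $S^{(2P+1)}(\pi)$ from the Leibniz rule applied to $\sin(x)^{2P-1}=(x-\pi)^{2P-1}v(x)$, followed by a factorial simplification. For (c) it uses Banach's fixed point theorem and a Lagrange remainder $S^{(2P+1)}(\xi_n)/(2P+1)!$, which plays the same role as your $g(x_n)$. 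The telescoping is no accident: it says that the truncation $\sum_{k=1}^{P}c_k\,t^{2k-1}$ satisfies the $\arcsin$ equation $(1-t^2)y''-ty'=0$ up to the single term $-(2P-1)\bigl(\prod_{\ell=1}^{P-1}\frac{2\ell-1}{2\ell}\bigr)t^{2P-1}$.

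The paper's route is finite and elementary, and it gives a closed form for $S''$ on all of $\mathbb{R}$. Your identity $S(x)=\pi-R_P(\sin x)$ instead rests on the branch relation $\arcsin(\sin x)=\pi-x$, so it holds only on $[\pi/2,3\pi/2]$. In return, your route buys several things:
\begin{itemize}
\item It explains where the order comes from: the error is exactly minus the $\arcsin$ tail at $\sin x$.
\item It gives all of $S^{(k)}(\pi)=0$, $1\le k\le 2P$, in one stroke. The paper writes out only $k=1,2$ and leaves $3\le k\le 2P$ implicit in its Leibniz argument.
\item It identifies the constant as $(2P+1)!\,c_{P+1}$.
\item It sidesteps the paper's Leibniz step, where $v$ is expressed through a Lagrange point $\xi=\xi(x)$. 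There the smoothness of $v$ has to be recovered by noting $v(x)=\bigl(\sin x/(x-\pi)\bigr)^{2P-1}$.
\end{itemize}
It even gives more than the theorem asks. Since all $c_k>0$ and $\arcsin|\sin x|=|x-\pi|$ on $[\pi/2,3\pi/2]$, you get $|S(x)-\pi|=|x-\pi|-\sum_{k=1}^{P}c_k|\sin x|^{2k-1}$ there, and $S(x)-\pi$ has the same sign as $x-\pi$. So every $x_0\in[\pi/2,3\pi/2]$ yields iterates that stay on one side of $\pi$ and converge to it monotonically. That is an explicit basin of convergence, which the paper's local Banach argument does not provide.
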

\section{Proof of the Theorem}
In this section, we prove the statements of the theorem.
\subsection{Proof of fixed point}
We only need plugging in $x=\pi$ into the fixed point function and obtain immediately
\[
S\left(\pi\right)=\pi+\sum _{k=1}^{P}\left(\prod_{\ell=1}^{k-1}\frac {2\,\ell-1}{2\,\ell}\right)\underbrace{\frac{\sin\left(\pi\right)^{2\,k-1}}{2\,k-1}}_{=0}=\pi\,.
\]
This proves the fixed point property.
\subsection{Proof of the derivatives}
Next, we turn to the derivatives. This requires a little bit more work.
\subsubsection{First Derivative}
Let's start with the first derivative. Differentiating the fixed point function yields
\begin{align*}
S\,'\left(x\right)
&=\left(x+\sum _{k=1}^{P}\left(\prod_{\ell=1}^{k-1}\frac {2\,\ell-1}{2\,\ell}\right)\frac{\sin\left(x\right)^{2\,k-1}}{2\,k-1}\right)'
\\
&=1+\sum _{k=1}^{P}\left(\prod_{\ell=1}^{k-1}\frac {2\,\ell-1}{2\,\ell}\right)\,\sin\left(x\right)^{2\,k-2}
\cdot\cos\left(x\right)\,.
\end{align*}
We then calculate the first derivative at the fixed point. To do this, we split the sum:
\begin{align*}
S\,'\left(x\right)=1
&+\sum _{k=1}^{1}\left(\prod_{\ell=1}^{k-1}\frac {2\,l-1}{2\,l}\right)\,\sin\left(x\right)^{2\,k-2}
\cdot\cos\left(x\right)
\\
&+\sum _{k=2}^{P}\left(\prod_{\ell=1}^{k-1}\frac {2\,l-1}{2\,l}\right)\,\sin\left(x\right)^{2\,k-2}
\cdot\cos\left(x\right)
\\
=1&+\cos\left(x\right)+\sum _{k=2}^{P}\left(\prod_{\ell=1}^{k-1}\frac {2\,l-1}{2\,l}\right)\,\sin\left(x\right)^{2\,k-2}
\cdot\cos\left(x\right)\,.
\end{align*}
Plugging in $x=\pi$ gives
\begin{align*}
S\,'\left(\pi\right)
=1&+\underbrace{\cos\left(\pi\right)}_{=-1}+\sum _{k=2}^{P}\left(\prod_{\ell=1}^{k-1}\frac {2\,l-1}{2\,l}\right)\,\underbrace{\sin\left(\pi\right)^{2\,k-2}}_{=0}\cdot\cos\left(x\right)
=1-1+0=0\,.
\end{align*}
One finally finds: $S\,'\left(\pi\right)=0\,.$ We need this result later.
\subsubsection{Second Derivative}
We differentiate the first derivative again and obtain
\begin{align*}
S\,''\left(x\right)
=&\sum _{k=1}^{P}\left(2\,k-2\right)\left(\prod_{\ell=1}^{k-1}\frac {2\,\ell-1}{2\,\ell}\right)\,\sin\left(x\right)^{2\,k-3}
\cdot\cos\left(x\right)^2
\\
+&\sum_{k=1}^{P}\left(\prod_{\ell=1}^{k-1}\frac {2\,\ell-1}{2\,\ell}\right)\,\sin\left(x\right)^{2\,k-2}
\cdot\left(-\sin\left(x\right)\right)
\\
=&\sum _{k=1}^{P}\left(\prod_{\ell=1}^{k-1}\frac{2\,\ell-1}{2\,\ell}\right)\left(2\,k-2\right)\,\sin\left(x\right)^{2\,k-3}
\cdot\cos\left(x\right)^2
\\
-&\sum_{k=1}^{P}\left(\prod_{\ell=1}^{k-1}\frac {2\,\ell-1}{2\,\ell}\right)\,\sin\left(x\right)^{2\,k-1}\,.
\end{align*}
Substituting $\cos\left(x\right)^2=1-\sin\left(x\right)^2$ gives
\begin{align*}
S\,''\left(x\right)
=&\sum _{k=1}^{P}\left(\prod_{\ell=1}^{k-1}\frac{2\,\ell-1}{2\,\ell}\right)\left(2\,k-2\right)\,\sin\left(x\right)^{2\,k-3}
\cdot\left(1-\sin\left(x\right)^2\right)
\\
-&\sum _{k=1}^{P}\left(\prod_{\ell=1}^{k-1}\frac{2\,\ell-1}{2\,\ell}\right)\,\sin\left(x\right)^{2\,k-1}
\\
=&\sum _{k=1}^{P}\left(\prod_{\ell=1}^{k-1}\frac {2\,\ell-1}{2\,\ell}\right)
\left(2\,k-2\right)\,\sin\left(x\right)^{2\,k-3}
\\
-&\sum _{k=1}^{P}\left(\prod_{\ell=1}^{k-1}\frac {2\,\ell-1}{2\,\ell}\right)\left(2\,k-2\right)\,\sin\left(x\right)^{2\,k-1}
\\
-&\sum _{k=1}^{P}\left(\prod_{\ell=1}^{k-1}\frac {2\,\ell-1}{2\,\ell}\right)\,\sin\left(x\right)^{2\,k-1}\,.
\end{align*}
Summarising the sums finally results in
\begin{align*}
S\,''\left(x\right)
&=\sum _{k=1}^{P}\left(\prod_{\ell=1}^{k-1}\frac {2\,\ell-1}{2\,\ell}\right)\left(2\,k-2\right)\,\sin\left(x\right)^{2\,k-3}
\\
&-\sum _{k=1}^{P}\left(\prod_{\ell=1}^{k-1}\frac {2\,\ell-1}{2\,\ell}\right)\left(2\,k-1\right)\,\sin\left(x\right)^{2\,k-1}\,.
\end{align*}
To make the matter more transparent, we will refer to the first sum as
\[
s_1=\sum _{k=1}^{P}\left(\prod_{\ell=1}^{k-1}\frac {2\,\ell-1}{2\,\ell}\right)\left(2\,k-2\right)\,\sin\left(x\right)^{2\,k-3}
\]
and the second sum as
\[
s_2=\sum _{k=1}^{P}\left(\prod_{\ell=1}^{k-1}\frac {2\,\ell-1}{2\,\ell}\right)\left(2\,k-1\right)\,\sin\left(x\right)^{2\,k-1}\,.
\]
The second derivative can then be written simply as
\[
S\,''\left(x\right)=s_1-s_2\,.
\]
Next step is splitting the sum $s_1$ as follows:
\begin{align*}
s_1&=\sum _{k=1}^{1}\left(\prod_{\ell=1}^{k-1}\frac {2\,\ell-1}{2\,\ell}\right)\left(2\,k-2\right)\,\sin\left(x\right)^{2\,k-3}
\\
&+\sum _{k=2}^{P}\left(2\,k-2\right)\left(\prod_{\ell=1}^{k-1}\frac {2\,\ell-1}{2\,\ell}\right)\,\sin\left(x\right)^{2\,k-3}\,.
\end{align*}
The first term is vanishing and it rests
\[
s_1=\sum _{k=2}^{P}\left(\prod_{\ell=1}^{k-1}\frac {2\,\ell-1}{2\,\ell}\right)\left(2\,k-2\right)\,\sin\left(x\right)^{2\,k-3}\,.
\]
Now shifting the summation index leads to
\[
s_1=\sum _{k=1}^{P-1}\left(\prod_{\ell=1}^{k}\frac {2\,\ell-1}{2\,\ell}\right)\left(2\,k\right)\,\sin\left(x\right)^{2\,k-1}\,.
\]
Expanding and rearranging the product term yields:
\begin{align*}
s_1
&=\sum _{k=1}^{P-1}
\left(\prod_{\ell=1}^{k-1}\frac {2\,\ell-1}{2\,\ell}\right)
\left(\prod_{l=k}^{k}\frac {2\,\ell-1}{2\,\ell}\right)
\left(2\,k\right)\,\sin\left(x\right)^{2\,k-1}
\\
&=\sum _{k=1}^{P-1}
\left(\prod_{\ell=1}^{k-1}\frac {2\,\ell-1}{2\,\ell}\right)
\left(\frac {2\,k-1}{2\,k}\right)\left(2\,k\right)\,\sin\left(x\right)^{2\,k-1}
\\
&=\sum _{k=1}^{P-1}
\left(\prod_{\ell=1}^{k-1}\frac {2\,\ell-1}{2\,\ell}\right)
\left(2\,k-1\right)\,\sin\left(x\right)^{2\,k-1}\,.
\end{align*}
Next, let's look at $s_2$. Also splitting the sum and simplifying gives a more simplified form:
\begin{align*}
s_2
&=\sum _{k=1}^{P}\left(\prod_{\ell=1}^{k-1}\frac {2\,\ell-1}{2\,\ell}\right)\left(2\,k-1\right)\,\sin\left(x\right)^{2\,k-1}
\\
&=\sum _{k=1}^{P-1}\left(\prod_{\ell=1}^{k-1}\frac {2\,\ell-1}{2\,\ell}\right)\left(2\,k-1\right)\,\sin\left(x\right)^{2\,k-1}
\\
&+\sum _{k=P}^{P}\left(\prod_{\ell=1}^{k-1}\frac {2\,\ell-1}{2\,\ell}\right)\left(2\,k-1\right)\,\sin\left(x\right)^{2\,k-1}
\\
&=\sum _{k=1}^{P-1}\left(\prod_{\ell=1}^{k-1}\frac {2\,\ell-1}{2\,\ell}\right)\left(2\,k-1\right)\,\sin\left(x\right)^{2\,k-1}
\\
&+\left(2\,P-1\right)\left(\prod_{\ell=1}^{P-1}\frac{2\,\ell-1}{2\,\ell}\right)\,\sin\left(x\right)^{2\,P-1}\,.
\end{align*}
Up to now we have
\begin{align*}
s_1&=\sum _{k=1}^{P-1}\left(\prod_{\ell=1}^{k-1}\frac {2\,\ell-1}{2\,\ell}\right)\left(2\,k-1\right)\,\sin\left(x\right)^{2\,k-1}\;;
\\
s_2&=\sum _{k=1}^{P-1}\left(\prod_{\ell=1}^{k-1}\frac {2\,\ell-1}{2\,\ell}\right)\left(2\,k-1\right)\,\sin\left(x\right)^{2\,k-1}
\\
&+\left(2\,P-1\right)\left(\prod_{\ell=1}{P-1}\frac {2\,\ell-1}{2\,\ell}\right)\,\sin\left(x\right)^{2\,P-1}\,.
\end{align*}
When used, we get
\begin{align*}
S\,''\left(x\right)
=&s_1-s_2
\\
=&\sum _{k=1}^{P-1}\left(\prod_{\ell=1}^{k-1}\frac {2\,\ell-1}{2\,\ell}\right)\left(2\,k-1\right)\,\sin\left(x\right)^{2\,k-1}
\\
-&\sum _{k=1}^{P-1}\left(\prod_{\ell=1}^{k-1}\frac {2\,\ell-1}{2\,\ell}\right)\left(2\,k-1\right)\,\sin\left(x\right)^{2\,k-1}
\\
-&\left(2\,P-1\right)\left(\prod_{\ell=1}^{P-1}\frac {2\,\ell-1}{2\,\ell}\right)\,\sin\left(x\right)^{2\,P-1}\,.
\end{align*}
$\\ $
The sums are vanishing and it remains the simple term:
\[
S\,''\left(x\right)=
-\left(2\,P-1\right)\left(\prod_{\ell=1}^{P-1}\frac{2\,\ell-1}{2\,\ell}\right)\,\sin\left(x\right)^{2\,P-1}\,.
\]
Plugging in the fixed point $x=\pi$ gives $S\,''\left(\pi\right)=\ldots=0\,.$
\subsubsection{Higher derivatives}
We still have to prove that the following statement holds at the fixed point $x=\pi:$
\begin{align*}
S^{\,\left(k\right)}\left(\pi\right)=\left\{
\begin{aligned}
0\qquad\qquad\qquad\;\; 1\leq &k\le 2\,P
\\
\\
\left(\prod_{\ell=1}^{P}\left(2\,\ell-1\right)\right)^{2}
\qquad\quad k=&2\,P+1
\\ 
\end{aligned} 
\right.
\end{align*}
We must differentiate $\left(2\,P+1\right)$ times $S\left(x\right)$ and then plugging in $x=\pi$. Using the second derivative $S\,''\left(x\right)$ the task simplifies to 
\begin{align*}
\left.\bigl(S\left(x\right)\bigr)^{\,\left(2\,P+1\right)}\right|_{x=\pi}
&=\left.\bigl(S\,''\left(x\right)\bigr)^{\,\left(2\,P-1\right)}\right|_{x=\pi}
\\
&=\left.\left(-\left(2\,P-1\right)\left(\prod\limits_{l=1}^{P-1}{\frac{2l-1}{2\,\ell}}\right)\sin {{\left(x\right)}^{2\,P-1}}\right)^{\left(2\,P-1\right)}\right|_{x=\pi}
\\
&=-\left(2\,P-1\right)\left(\prod\limits_{l=1}^{P-1}{\frac{2l-1}{2\,\ell}}\right)\left.\left(\sin {{\left(x\right)}^{2\,P-1}}\right)^{\left(2\,P-1\right)}\right|_{x=\pi}\,.
\end{align*}
On the right-hand side, we need at $x=\pi$ the $\left(2\,P-1\right)^{th}$ derivative of $\sin\left(x\right)^{2\,P-1}$. To do this, we proceed as follows: We expand $\sin\left(x\right)$ into a Taylor series of degree 1 at the fixed point $\pi$ with Lagrange remainder:
\[
\sin\left(x\right)
=\sin\left(\pi\right)
+\sin'\left(\pi\right)\left(x-\pi\right)
+\frac{\sin''\left(\xi\right)}{2}\left(x-\pi\right)^{2}
\]
where $\xi$ is between $x$ and $\pi$. Since $\sin\left(\pi\right)=0$ and $\sin'\left(\pi\right)=\cos\left(\pi\right)=-1$, we obtain
\[
\sin\left(x\right)
=-\left(x-\pi\right)+\frac{\sin''\left(\xi\right)}{2}\left(x-\pi\right)^{2}\,.
\]
Factoring out $\left(x-\pi\right)$ yields
\[
\sin\left(x\right)
=\left(x-\pi\right)\left(-1+\frac{\sin''\left(\xi\right)}{2}\left(x-\pi\right)\right)\,.
\]
Next we raise the quation to the $\left(2\,P-1\right)^{\,th}$ power:
\[
\sin\left(x\right)^{2\,P-1}
=\left(x-\pi\right)^{2\,P-1}
\left(-1+\frac{\sin{\,''}\left(\xi\right)}{2}\left(x-\pi\right)\right)^{2\,P-1}\,.
\]
We calculate the higher derivatives of $\sin\left(x\right)^{2\,P-1}$ by using the Leibniz rule \cite{LE}.
Leibniz rule states that if $u\left(x\right)$ and $v\left(x\right)$ are $n$-times differentiable functions, then the product $u\left(x\right)\cdot v\left(x\right)$ is also $n$-times differentiable and its $n^{\,th}$ derivative is given by the sum
\[
\bigl(u\left(x\right)\cdot v\left(x\right)\bigr)^{\left(n\right)}
=\sum\limits_{k=0}^{n}\binom{n}{k}\,u\left(x\right)^{\left(k\right)}\cdot v\left(x\right)^{\left(n-k\right)}\,.
\]
Here we want to understand $u\left(x\right)^{\left(0\right)}=u\left(x\right)$ and $v\left(x\right)^{\left(0\right)}=v\left(x\right)\,.$
$\\ $
$\\ $
In our case we have:
\[
n=2\,P-1\;;
\]
\[
u\left(x\right)=\left(x-\pi\right)^{2\,P-1}\;;
\]
\[
v\left(x\right)=\left(-1+\frac{\sin''\left(x\right)}{2}\left(x-\pi\right)\right)^{2\,P-1}\,.
\]
Thus we obtain
\[
\sin\left(x\right)^{2\,P-1}=
\underbrace{{\left(x-\pi\right)}^{2\,P-1}}_{u\left(x\right)}
\cdot
\underbrace{{{\left(-1+\frac{\sin^{\,''}{{\left(\xi\right)}}}{2}\left(x-\pi\right)\right)}^{2\,P-1}}}_{v\left(x\right)}\,.
\]
Now applying Leibniz rule gives
\begin{align*}
&\left(\sin\left(x\right)^{2\,P-1}\right)^{\left({2\,P-1}\right)}
\\
&=\sum\limits_{k=0}^{{2\,P-1}}\binom{{2\,P-1}}{k}
\,\left(\left(x-\pi\right)^{2\,P-1}\right)^{\left(k\right)}
\cdot
\left(\left(-1+\frac{\sin^{\,''}{{\left(\xi\right)}}}{2}\left(x-\pi\right)\right)^{{2\,P-1}}\right)^{\left({2\,P-k-1}\right)}\,.
\end{align*}
We first focus on the sequence of derivatives
\[
u\left(x\right)^{\left(k\right)}
=\left(\left(x-\pi\right)^{{2\,P-1}}\right)^{\left(k\right)},k=0,1,\ldots,{2\,P-1}\,.
\]
From a mathematical handbook \cite{SO}, we derive the following formula:
\begin{align*}
\Bigl(\left(x-\pi\right)^{2\,P-1}\Bigr)^{\left(k\right)}
=\frac{\left(2\,P-1\right)\,!}{\left(2\,P-1-k\right)\,!}\left(x-\pi\right)^{2\,P-1-k}\,.
\end{align*}
$\\ $
In particular, we note that for $k<2\,P-1$ in the formula
\[
u\left(x\right)^{\left(k\right)}=
\left(\left(x-\pi\right)^{{2\,P-1}}\right)^{\left(k\right)}
\]
the factor
$\left(x-\pi\right)$ always occurs and thus for $x=\pi$ the terms are vanishing. Only the case $k=\left(2\,P-1\right)$ leads to a non-zero result, namely the product
\[
{\left.u{{\left(x\right)}^{\left(2\,P-1\right)}}\right|}_{x=\pi}=\left(2\,P-1\right)\,!\,.
\]
From the sum
\[
\sum\limits_{k=0}^{{2\,P-1}}\binom{{2\,P-1}}{k}
\,\left(\left(x-\pi\right)^{2\,P-1}\right)^{\left(k\right)}
\cdot
\left(\left(-1+\frac{\sin^{\,''}{{\left(\xi\right)}}}{2}\left(x-\pi\right)\right)^{{2\,P-1}}\right)^{\left({2\,P-k-1}\right)}\,.
\]
only the term with $k=\left(2\,P-1\right)$ remains at the fixed point $x=\pi$. We can evaluate
\begin{align*}
&{\left.{{\left(\sin{{\left(x\right)}^{2\,P-1}}\right)}^{\left(2\,P-1\right)}}\right|}_{x=\pi}
\\
&=
\underbrace{\sum\limits_{k={2\,P-1}}^{{2\,P-1}}\binom{{2\,P-1}}{{2\,P-1}}\left.\left(\left(x-\pi\right)^{2\,P-1}\right)^{\left(2\,P-1\right)}\right|_{x=\pi}}_{=\left(2\,P-1\right)\,!}
\cdot
\underbrace{\left.\left(\left(-1
+\frac{\sin''{\left(\xi\right)}}{2}\left(x-\pi\right)\right)^{2\,P-1}\right)\right|_{x=\pi}}_{=-1}\,.
\end{align*}
We obtain the simple result
\[
{\left.{{\left(\sin{{\left(x\right)}^{2\,P-1}}\right)}^{\left(2\,P-1\right)}}\right|}_{x=\pi}
=-\left(2\,P-1\right)\,!\,.
\]
Our task was the calculation of
\begin{align*}
\left.S^{\,\left(2\,P+1\right)}\left(x\right)\right|_{x=\pi}
=\left.\left(-\left(2\,P-1\right)\left(\prod\limits_{l=1}^{P-1}{\frac{2l-1}{2\,\ell}}\right)\sin {{\left(x\right)}^{2\,P-1}}\right)^{\left(2\,P-1\right)}\right|_{x=\pi}\,.
\end{align*}
Now plugging in yields
\begin{align*}
S^{\,\left(2\,P+1\right)}\left(\pi\right)
&=-\left(2\,P-1\right)\left(\prod\limits_{\ell=1}^{P-1}{\frac{2\ell-1}{2\,\ell}}\right)
\bigl(-\left(2\,P-1\right)\,!\bigr)
\\
&=\left(2\,P-1\right)\cdot\left(2\,P-1\right)\,!\left(\prod\limits_{l=1}^{P-1}{\frac{2l-1}{2l}}\right)\,.
\end{align*}
We we can simplify the product term on the right hand side as follows:
\begin{align*}
\left(2\,P-1\right)\cdot\left(2\,P-1\right)\,!\cdot
\left(\prod\limits_{\ell=1}^{P-1}{\frac{2\ell-1}{2\,\ell}}\right)
&=\frac{\left(2\,P-1\right)\,!}{\prod\limits_{\ell=1}^{P-1}\left({2\,\ell}\right)}\cdot
\left(2\,P-1\right)\cdot\prod\limits_{\ell=1}^{P-1}\left({2\ell-1}\right)
\\
&=\frac{\left(2\,P-1\right)\,!}{\prod\limits_{\ell=1}^{P-1}\left({2\,\ell}\right)}\cdot\prod\limits_{\ell=1}^{P}\left({2\ell-1}\right)\,.
\end{align*}
Next we focus on the term
\[
\frac{\left(2\,P-1\right)\,!}{\prod\limits_{\ell=1}^{P-1}\left({2\,\ell}\right)}\,.
\]
Because it's a finite product, we can write down the factors directly
\[
\frac{\left(2\,P-1\right)\,!}{\prod\limits_{\ell=1}^{P-1}\left({2\,\ell}\right)}
=\frac
{1\cdot 2\cdot 3\ldots\left({2\,P-2}\right)\left({2\,P-1}\right)}
{2\cdot 4\cdot 6\ldots\left(2\,P-2\right)\qquad\qquad}\,.
\]
We cancel out the even factors and only the product of the odd numbers remains:
\[
\frac{\left(2\,P-1\right)\,!}{\prod\limits_{\ell=1}^{P-1}\left({2\,\ell}\right)}
=1\cdot 3\cdot 5\ldots\left({2\,P-3}\right)\left({2\,P-1}\right)
=\prod\limits_{\ell=1}^{P}\left({2\ell-1}\right)\,.
\]
Using this result we finally obtain
\begin{align*}
S^{\,\left(2\,P+1\right)}\left(\pi\right)
&=\frac{\left(2\,P-1\right)\,!}{\prod\limits_{\ell=1}^{P-1}\left({2\,\ell}\right)}\cdot\prod\limits_{\ell=1}^{P}\left({2\,\ell-1}\right)
=\left(\prod\limits_{\ell=1}^{P}\left({2\,\ell-1}\right)\right)
\cdot \left(\prod\limits_{\ell=1}^{P}\left({2\,\ell-1}\right)\right)
\\
&=\left(\prod\limits_{\ell=1}^{P}\left({2\,\ell-1}\right)\right)^2\,.
\end{align*}
Putting all the pieces together gives us the desired result
\begin{align*}
S^{\,\left(k\right)}\,{\left(\pi\right)}=\left\{
\begin{aligned}
0\qquad\qquad\qquad\;\; 1\le\,&k\leq 2\,P
\\
\\ 
\;\left(\prod\limits_{\ell=1}^{P}\left({2\ell-1}\right)\right)^2\,\;\qquad k = &2\,P+1 
\end{aligned} 
\right.
\end{align*}
This completes the proof of the derivations.
\subsection{Proof of convergence}
We do the prove by using Banach's fixed point theorem \cite {WG} in a special form for continuously differentiable functions in ${\mathbb{R}}^{1}\,.$
$ \\ $
$ \\ $
\underline{Banach fixed point theorem}
\\
Let $U\subseteq R$ be a closed subset; furthermore, let $F:U\rightarrow R$ be a mapping with the following properties\[
F\left(U\right)\subseteq U \,.
\]
There exists $0<L<1$ such that
\[
\left|F\,'\left(x\right)\right| \leq L\,,\; \text{for\,\;all} \;x\in U \,.
\]
Then the following statements hold:
\begin{itemize}
\item[(a)]
There is exactly one fixed point ${{x}^{*}}\in U$ of $F\,.$
\item[(b)]
For every initial value $x_0\in U$, the sequence
$x_{n+1}=F\left(x_{{n}}\right)$
converges to $x^*\,.$
\end{itemize}
$\\ $
\underline{Proof of convergence} %\cite {WG}
$\\ $
We have already proven that $S\left(\pi\right)=\pi$ and $S\,'\left(\pi\right)=0$ hold true. Then, for reasons of continuity, there exists a neighbourhood at the fixed point $\pi$
\[U_{\delta}=\left\{x\in\mathbb{R}:\left|x-\pi\right|\leq\delta\right\}\]
in which the following applies: $S\left(U_\delta\right)\subseteq U_\delta$ and $\left|S\,'\left(x\right)\right|<L<1$ for all $x\in U_{\delta}$.
Thus, the conditions for the fixed point theorem are satisfied and convergence follows.
\subsection{Order of convergence}
Finally, we have to proof that the order of convergence is exactly $\left(2\,P+1\right)$.
We expand $S\left(x\right)$ into a Taylor series around the fixed point $\pi:$
\[
S\left(x\right)=
S\left(\pi\right)
+S\,'\left(\pi\right)\left(x-\pi\right)
+\ldots
+S\,^{\,\left(2\,P\right)}\left(\pi\right)
\frac{\left(x-\pi\right)^{2\,P}}{\left(2\,P\right)\,!}
+S\,^{\,\left(2\,P+1\right)}\left(\xi\right)
\frac{\left(x-\pi\right)^{2\,P+1}}{\left(2\,P+1\right)\,!}\,,
\]
where $\xi$ is between $x$ and $\pi$. Let us remind $S\left(\pi\right)=\pi$ and the differentiation results
\begin{align*}
S^{\,\left(k\right)}\left(\pi\right)=\left\{
\begin{aligned}
0\qquad\qquad\qquad\;\; 1\leq &k\le 2\,P
\\
\\
\left(\prod_{\ell=1}^{P}\left(2\,\ell-1\right)\right)^{2}
\qquad\quad k=&2\,P+1
\\ 
\end{aligned} 
\right.
\end{align*}
The Taylor series thus simplifies to
\[
S\left(x\right)=
\underbrace{S\left(\pi\right)}_{=\pi}
+\underbrace{S\,'\left(\pi\right)\left(x-\pi\right)
+\ldots
+S\,^{\,\left(2\,P\right)}\left(\pi\right)
\frac{\left(x-\pi\right)^{2\,P}}{\left(2\,P\right)\,!}}_{=\,0}
+S\,^{\,\left(2\,P+1\right)}\left(\xi\right)
\frac{\left(x-\pi\right)^{2\,P+1}}{\left(2\,P+1\right)\,!}
\]
\[
=
\pi
+S\,^{\,\left(2\,P+1\right)}\left(\xi\right)
\frac{\left(x-\pi\right)^{2\,P+1}}{\left(2\,P+1\right)\,!}\,.
\]
Now plugging in $x=x_n$ sufficiently close to $\pi$, we obtain
\begin{align*}
S\left(x_{n}\right)=\pi+S\,^{\left(2\,P+1\right)}\left(\xi_{n}\right)
\frac{\left(x_{n}-\pi\right)^{2\,P+1}}{\left(2\,P+1\right)\,!}\,,
\end{align*}
where $\xi_n$ is between $x_n$ and $\pi$. Since $S\left(x_n\right)=x_{n+1}$, we get
\begin{align*}
x_{n+1}=\pi
+S\,^{\left(2\,P+1\right)}\left(\xi_{n}\right)
\frac{\left(x_{n}-\pi\right)^{2\,P+1}}{\left(2\,P+1\right)\,!}\,.
\end{align*}
Taking $\pi$ to the left side and dividing by $\left(x_{n+1}-\pi\right)^{2\,P+1}$ yields
\[
\frac{x_{n+1}-\pi}{\left(x_{n}-\pi\right)^{2\,P+1}}=
\frac{S\,^{\left(2\,P+1\right)}\left(\xi_{n}\right)}{\left(2\,P+1\right)\,!}\,.
\]
As $x_n\rightarrow \pi$, since $ \xi_n$ is trapped between $x_n$ and $\pi$ we conclude, by the continuity of $S^{\,\left(P+1\right)}\left(x\right)$ at $\pi$, that
\begin{align*}
\underset{n\to \infty}{\mathop{\lim }}\; \frac{x_{n+1}-\pi}{\left(x_{n}-\pi\right)^{2\,P+1}}
=\underset{n\to \infty}{\mathop{\lim }}\;
\frac{S\,^{\left(2\,P+1\right)}\left(\xi_{n}\right)}{\left(2\,P+1\right)\,!}
=
\frac{S\,^{\left(2\,P+1\right)}\left(\pi\right)}{\left(2\,P+1\right)\,!}\,.
\end{align*}
We use an earlier result
\[
S\,{^{\left({2\,P+1}\right)}\left(\pi\right)}
=\left(\prod\limits_{\ell=1}^{P}\left({2\,\ell-1}\right)\right)^2\,.
\]
Plugging in gives
\begin{align*}
\underset{n\to \infty}{\mathop{\lim }}\;\;
\frac{x_{n+1}-\pi}{\left(x_{n}-\pi\right)^{2\,P+1}}
=\frac{1}{\left(2\,P+1\right)\,!}\left(\prod\limits_{\ell=1}^{P}\left({2\,\ell-1}\right)\right)^2\,.
\end{align*}
This shows that convergence is exactly of the order $\left(2\,P+1\right)$, and
\[
\frac{1}{\left(2\,P+1\right)\,!}\left(\prod\limits_{\ell=1}^{P}\left({2\,\ell-1}\right)\right)^2
\]
is the asymptotic error constant.
\section{\texorpdfstring{Practical computation of\;$\pi$}{}}
Let $P\in\mathbb{N}$. That gives convergence order $\left(2\,P+1\right)$. We evaluate the fixed point function:
\[
S\left(x\right)=x+\sum _{k=1}^{P}\left(\left(\prod_{\ell=1}^{k-1}\frac {2\,\ell-1}{2\,\ell}\right)\frac{\sin\left(x\right)^{2\,k-1}}{2\,k-1}\right)\,.
\]
We observe that the somewhat complicated coefficients do not change throughout the calculation. It is therefore advantageous to calculate them beforehand and store them in a vector:
\[
c{\,_k}=\left(\prod_{\ell=1}^{k-1}\frac {2\,\ell-1}{2\,\ell}\right)\cdot\frac{1}{2\,k-1}\;;k=1\,,\ldots,\,{P}\,.
\]
The fixed point function is then simplified to
\[S\left(x\right)=\sum\limits_{k=1}^{P}c_{\,k}\,\sin\left(x\right)^{2\,k-1}.\]
From this, we derive the iteration rule
\[x_{n+1}=x_n+\sum\limits_{k=1}^{P}c_{\,k}\,\sin\left(x_n\right)^{2\,k-1}\;;\;n=0,1,\ldots\]
For practical computations, it is useful to shift the index. Instead of
\[x_{n+1}=x_n+\sum\limits_{k=1}^{P}c_{\,k}\,\sin\left(x_n\right)^{2\,k-1}\;;\;n=0,1,\ldots\]
we will write
\[x_{n}=x_{n-1}+\sum\limits_{k=1}^{P}c_{\,k}\,\sin\left(x_{n-1}\right)^{2\,k-1}\;;\;n=1,2,\ldots\,.\]
The index $n$ then runs synchronously with the number of iterations.
With an initial value $x_0$ sufficiently close to $\pi$, we start the iteration:
\[\underline{step\;\;1}\]
\[x_{1}=x_{0}+\sum\limits_{k=1}^{P}c_{\,k}\,\sin\left(x_{0}\right)^{2\,k-1}\,;\]

\[\underline{step\;\;2}\]
\[x_{2}=x_{1}+\sum\limits_{k=1}^{P}c_{\,k}\,\sin\left(x_{1}\right)^{2\,k-1}\,;\]

\[\underline{step\;\;3}\]
\[x_{3}=x_{2}+\sum\limits_{k=1}^{P}c_{\,k}\,\sin\left(x_{2}\right)^{2\,k-1}\,;\]

\[\vdots\]

\[\underline{step\;\;n}\]
\[x_{n}=x_{n-1}+\sum\limits_{k=1}^{P}c_{\,k}\,\sin\left(x_{n-1}\right)^{2\,k-1}\,;\]
$ \\ $
If, now $n\rightarrow \infty$, then $x_{n}\rightarrow \pi$ with order of convergence $\left(2\,P+1\right)$. With unlimited computing time and unlimited storage space, we can calculate $\pi$  with unlimited precision. Unfortunately, we do not have anything like that available. We must be content with a finite number of iterations and a finite value of $\pi$.
$ \\ $
$ \\ $
We require a termination criterion. For this purpose, we specify $\epsilon>0$ and perform the iteration until the following statement holds
\[\left|x_n-x_{n-1}\right|<\epsilon\,.\]
\subsection{Computational tests}
All figures and numerical calculations in this paper have been performed by using the computer algebra system \textbf{Maple 2025.2}.
$ \\ $
$ \\ $
In the first example let $P=1$. This gives order of convergence $\left(2\,P+1\right)=3$. Each additional step triples approximately the number of correct digits. We have only one coefficient $c_{\,1}=1\,.$ The fixed point function is very simple:
\[S\left(x\right)=x+\sin\left(x\right)\,.\]
%\newpage
Let's look at the functions $\sin\left(x\right)$ and $S\left(x\right)$ together with $y=x:$
\begin{center}
\includegraphics[width=1.0\linewidth]{"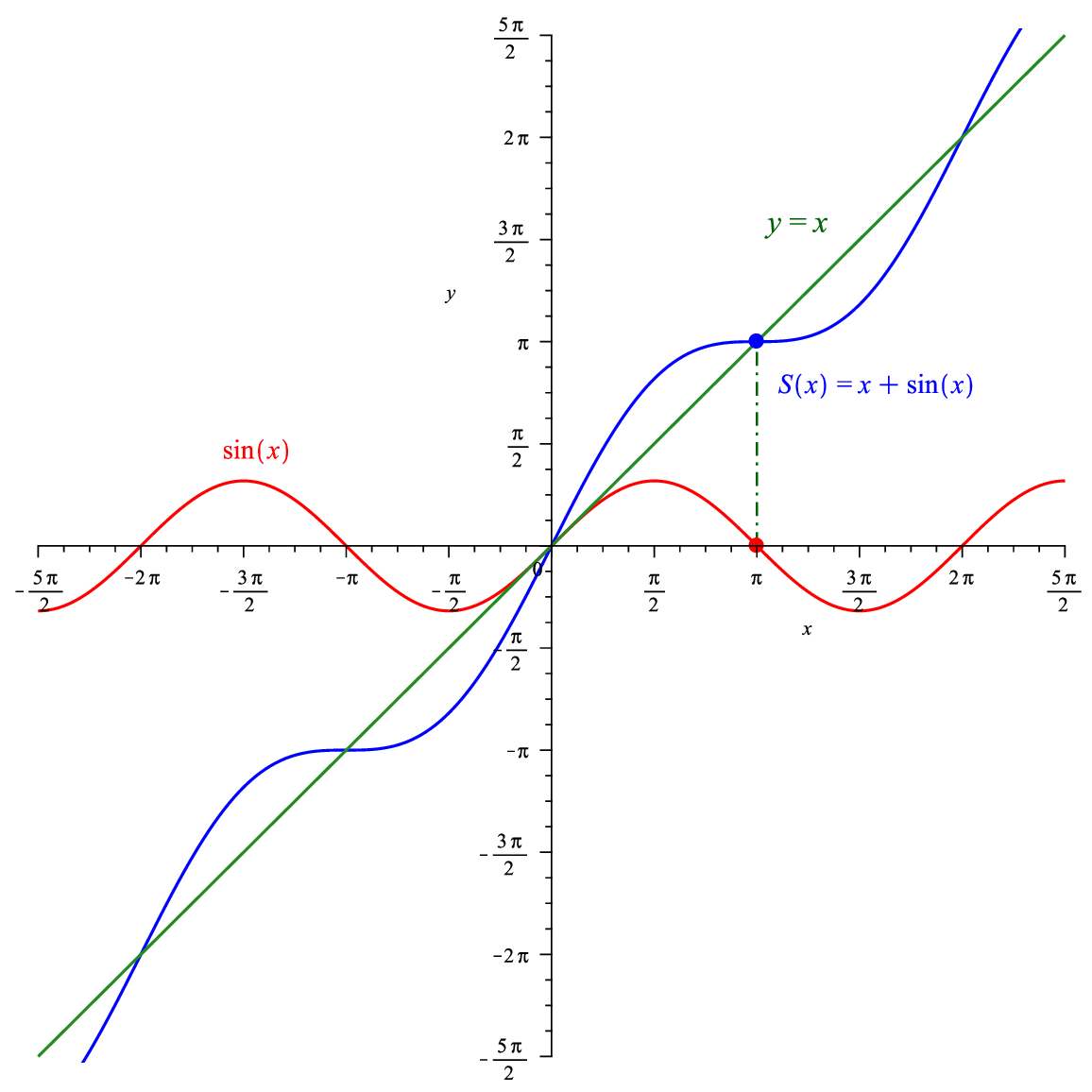"}
\end{center}
\textbf{\underline{figure 1\,:}}$\;\;\sin\left(x\right),\;S\left(x\right)=x+\sin\left(x\right)$ and $y=x$
$\\ $
$\\ $
From $S\left(x\right)$ we derive the iteration sequence \[x_{n}=x_{n-1}+\sin\left(x_{n-1}\right)\,.\]
We start with initial value $x_0=3$ and compute seven iterations
\[\underline{Step\;\;1}\]
\[x_{{1}}=x_{{0}}+\sin\left(x_{{0}}\right)\]
\[x_{{1}}=3.141120008059867222100744802808110279847\]
\[\left|x_{{1}}-x_{{0}}\right|=0.1411200080598672221007448028081102798469\]
\[\underline{Step\;\;2}\]
\[x_{{2}}=x_{{1}}+\sin\left(x_{{1}}\right)\]
\[x_{{2}}=3.141592653572195558734888568140879746743\]
\[\left|x_{{2}}-x_{{1}}\right|=0.0004726455123283366341437653327694668960596\]
\[\underline{Step\;\;3}\]
\[x_{{3}}=x_{{2}}+\sin\left(x_{{2}}\right)\]
\[x_{{3}}=3.141592653589793238462643383279501975927\]
\[\left|x_{{3}}-x_{{2}}\right|={1.759767972775481513862222918415964000597\times10^{-11}}\]

\[\underline{Step\;\;4}\]
\[x_{{4}}=x_{{3}}+\sin\left(x_{{3}}\right)\]
\[x_{{4}}=3.141592653589793238462643383279502884197\]
\[\left|x_{{4}}-x_{{3}}\right|={9.082700169421541422061207647042849447509\times10^{-34}}\]

\[\underline{Step\;\;5}\]
\[x_{{5}}=x_{{4}}+\sin\left(x_{{4}}\right)\]
\[x_{{5}}=3.141592653589793238462643383279502884197\]
\[\left|x_{{5}}-x_{{4}}\right|={1.248802280614662060876298822692655898920\times10^{-100}}\]

\[\underline{Step\;\;6}\]
\[x_{{6}}=x_{{5}}+\sin\left(x_{{5}}\right)\]
\[x_{{6}}=3.141592653589793238462643383279502884197\]
\[\left|x_{{6}}-x_{{5}}\right|={3.245860113595057511363635505332694019725\times10^{-301}}\]

\[\underline{Step\;\;7}\]
\[x_{{7}}=x_{{6}}+\sin\left(x_{{6}}\right)\]
\[x_{{7}}=3.141592653589793238462643383279502884197\]
\[x_{{7}}-x_{{6}}={5.699518230086813078688904262462295801452\times10^{-903}}\]
$\\ $
With seven iterations, we have received more than 900 digits of $\pi$. The negative exponents of the differences indicate a convergence order of $3$. The negative exponents approximately triple with each iteration step.
\subsection{\texorpdfstring{Computing one million digits of\;$\pi$\;with optimisations}{}}
The next example is more challenging. We want to compute one million digits of $\pi$. That means $\epsilon=10^{-1000000}$. In order to use the fixed point function efficiently for high-precision computation of $\pi$, a fast evaluation of $\sin\left(x\right)$ is required. There are many interesting publications on this topic \cite{RB},\cite{FJ1},\cite{FJ2},\cite{HG},\cite{RBA}. From the large number of proposed methods, we have selected the following three:
$\\ $
$\\ $
\underline{\bf{1.\;Selecting initial value very close to $\pi$}}
$\\ $
Now, we use as starting value
\[x_0=3.14159265358979324\]
which is already accurate to 18 digits.
%\[x_0=3.14159265358979323846264338327950288419716939937510582097494459230782\]
$ \\ $
$ \\ $
\underline{\bf{2.\;Increase in order of convergence}}
$\\ $
To ensure that the order of convergence takes full effect, we choose $P=4$. That gives order of convergence $2\cdot 4+1=9$. Then $x_n$ converges nonically to $\pi$, that is, each iteration approximately multiplies the number of correct digits by nine. Thus we need six iterations to obtain at least one million digits of $\pi$.
$ \\ $
$ \\ $
\underline{\bf{3.\;Increase in the number of Digits}}
$\\ $
The fixed point iteration
\[x_{n}=x_{n-1}+\sum\limits_{k=1}^{P}c_{\,k}\,\sin\left(x_{n-1}\right)^{2\,k-1}\]
is self-correcting. Each iteration must not be performed with the desired number of correct digits of $\pi$'s final result. We can start with lower precision.
$\\ $
$\\ $
The \textbf{Digits} environment variable controls the number of digits that Maple uses when making calculations with software floating-point numbers. In step 1 we then need an accuracy of $18\cdot 9=162$ \textbf{Digits}. In step 2 the needed accuracy is $18\cdot 9^2=1458$ \textbf{Digits} and so on until step 5 with accuracy of $18\cdot 9^5=1062882$ \textbf{Digits}. We have now reached the specified accuracy of at least one million digits. Therefore, in the final step 6, we no longer need to multiply the number of \textbf{Digits} by 9.
$ \\ $
$ \\ $
First determining the coefficients
\[c_1=1\;;\;c_2=\frac{1}{6}\;;\;c_3=\frac{3}{40}\;;\;c_4=\frac{5}{112}\]
 and the fixed point function:
\[
S\left(x\right)
=x+\sin\left(x\right)
+\frac{1}{6}\,\sin\left(x\right)^{3}
+\frac{3}{40}\,\sin\left(x\right)^{5}
+\frac{5}{112}\,\sin\left(x\right)^{7}\,.
\]
From $S\left(x\right)$ we derive the iteration rule:
\[x_{n}=x_{n-1}+\sin\left(x_{n-1}\right)
+\frac{1}{6}\sin\left(x_{n-1}\right)^{3}
+\frac{3}{40}\,\sin\left(x_{n-1}\right)^{5}
+\frac{5}{112}\,\sin\left(x_{n-1}\right)^{7}\;.\]
$\\ $
Before starting the computation we take a look at the involved functions:
\begin{center}
\includegraphics[width=1.0\linewidth]{"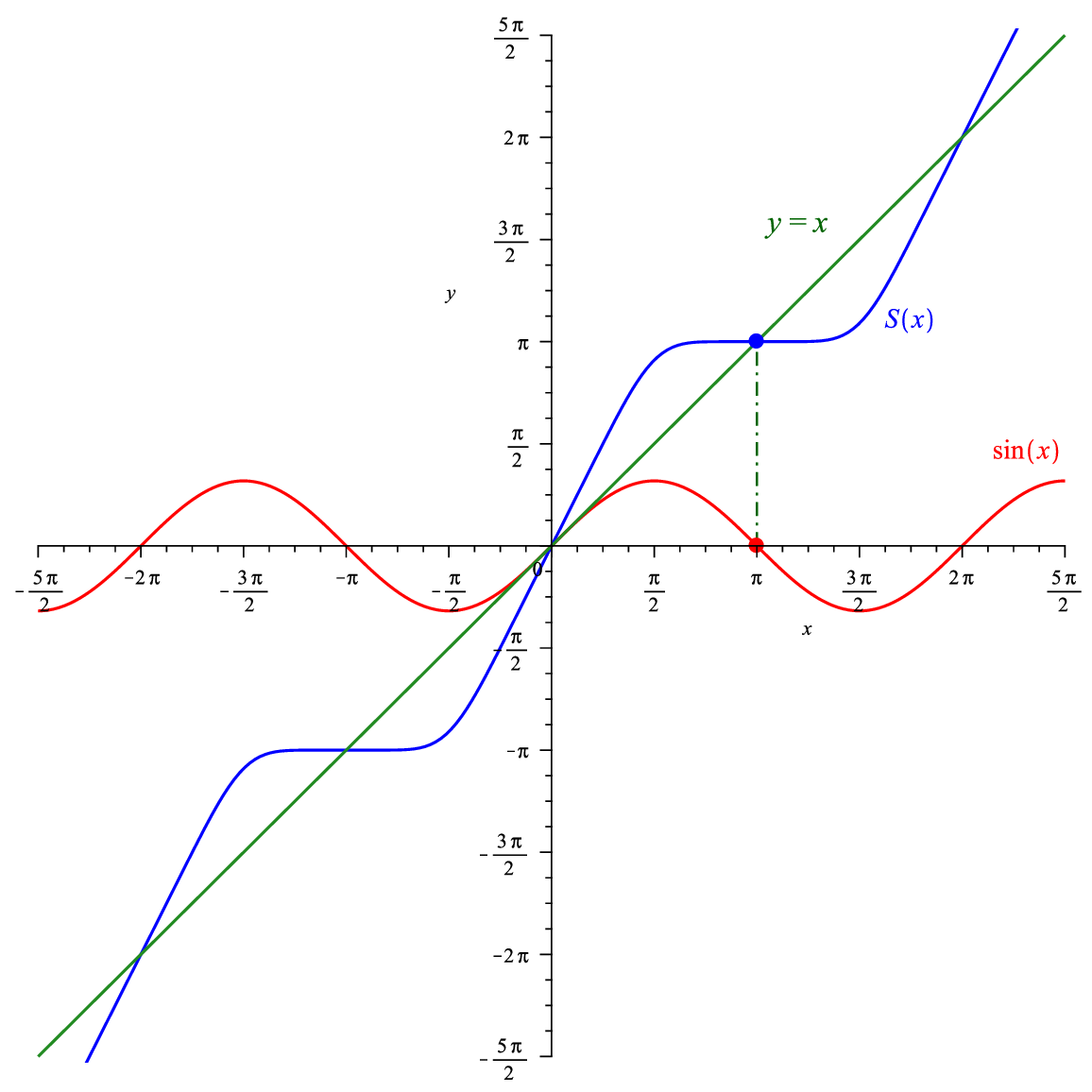"}
\end{center}
%\color{black}
%\color{Sepia}
\textbf{\underline{figure 2\,:}}$\;\;\sin\left(x\right),\;S\left(x\right)
=x+\sin\left(x\right)
+\frac{1}{6}\,\sin\left(x\right)^{3}
+\frac{3}{40}\,\sin\left(x\right)^{5}
+\frac{5}{112}\,\sin\left(x\right)^{7}$ and $y=x$
$\\ $
$\\ $
In the next $\pi$ calculation, we have implemented the three optimisation methods described above. We start with the initial value \[x_0=3.14159265358979324\] and compute six steps. In addition, we output the number of \textbf{Digits} with which Maple calculates at each step. We obtain:
\[\underline{step\;\;1}\]
\[Digits=18\cdot 9=162\]
\[x_{{1}}=x_{{0}}+\sin\left(x_{{0}}\right)+{\frac{\sin\left(x_{{0}}\right)^{3}}{6}}+{\frac{3\,\sin\left(x_{{0}}\right)^{5}}{40}}+{\frac{5\,\sin\left(x_{{0}}\right)^{7}}{112}}\]
\[x_{{1}}=3.141592653589793238462643383279502884197\]
\[\left|x_{{1}}-x_{{0}}\right|={1.537356616720497115802830600624894179025\times10^{-18}}\]

\[\underline{step\;\;2}\]
\[Digits=18\cdot 9^2=1458\]
\[x_{{2}}=x_{{1}}+\sin\left(x_{{1}}\right)+{\frac{\sin\left(x_{{1}}\right)^{3}}{6}}+{\frac{3\,\sin\left(x_{{1}}\right)^{5}}{40}}+{\frac{5\,\sin\left(x_{{1}}\right)^{7}}{112}}\]
\[x_{{2}}=3.141592653589793238462643383279502884197\]
\[\left|x_{{2}}-x_{{1}}\right|={5.897298061478894440355377051045069618036\times10^{-162}}\]

\[\underline{step\;\;3}\]
\[Digits=18\cdot 9^3=13122\]
\[x_{{3}}=x_{{2}}+\sin\left(x_{{2}}\right)+{\frac{\sin\left(x_{{2}}\right)^{3}}{6}}+{\frac{3\,\sin\left(x_{{2}}\right)^{5}}{40}}+{\frac{5\,\sin\left(x_{{2}}\right)^{7}}{112}}\]
\[x_{{3}}=3.141592653589793238462643383279502884197\]
\[\left|x_{{3}}-x_{{2}}\right|={2.621201614945054114130730043090727892025\times10^{-1453}}\]

\[\underline{step\;\;4}\]
\[Digits=18\cdot 9^4=118098\]
\[x_{{4}}=x_{{3}}+\sin\left(x_{{3}}\right)+{\frac{\sin\left(x_{{3}}\right)^{3}}{6}}+{\frac{3\,\sin\left(x_{{3}}\right)^{5}}{40}}+{\frac{5\,\sin\left(x_{{3}}\right)^{7}}{112}}\]
\[x_{{4}}=3.141592653589793238462643383279502884197\]
\[\left|x_{{4}}-x_{{3}}\right|={1.774677351983440263736354294020071931434\times10^{-13075}}\]

\[\underline{step\;\;5}\]
\[Digits=18\cdot 9^5=1062882\]
\[x_{{5}}=x_{{4}}+\sin\left(x_{{4}}\right)+{\frac{\sin\left(x_{{4}}\right)^{3}}{6}}+{\frac{3\,\sin\left(x_{{4}}\right)^{5}}{40}}+{\frac{5\,\sin\left(x_{{4}}\right)^{7}}{112}}\]
\[x_{{5}}=3.141592653589793238462643383279502884197\]
\[\left|x_{{5}}-x_{{4}}\right|={5.305059119433192986813252324813505545638\times10^{-117675}}\]

\[\underline{step\;\;6}\]
\[Digits=1062882\]
\[x_{{6}}=x_{{5}}+\sin\left(x_{{5}}\right)+{\frac{\sin\left(x_{{5}}\right)^{3}}{6}}+{\frac{3\,\sin\left(x_{{5}}\right)^{5}}{40}}+{\frac{5\,\sin\left(x_{{5}}\right)^{7}}{112}}\]
\[x_{{6}}=3.141592653589793238462643383279502884197\]
\[\left|x_{{6}}-x_{{5}}\right|={1.011178012461738293576866860651780969327\times10^{-1059070}}\]
$\\ $
With six steps, we obtained well over a million digits of $\pi$. The negative exponents of the differences show the convergence order $9$. The negative exponents approximately increase by a factor of 9 with each iteration step.
$\\ $
We performed all computational results by using a home-made PC with the following hardware configuration: Motherboard ASUS PRIME A320M-K with CPU AMD Ryzen 5 5600G 6 CORE 3.90-4.40 GHz and 32 GB RAM. The used software was MAPLE 2025.2 by  Maplesoft, Waterloo Maple Inc. This software was also used to assist with formatting the mathematical terms into LaTeX.

\selectlanguage{english}

\newpage
\selectlanguage{ngerman}
\begin{center}
\LARGE{\textbf{Fixpunkt-Verfahren zur Berechnung
\\der Kreiszahl Pi mit beliebiger ungerader
\\Konvergenzordnung auf Basis von sin(x)}}
\[\]
\[\]
\large
{Alois Schiessl}
%\vskip 0.25cm
%{E-Mail: aloisschiessl@web.de}
\footnote[1]{{University of Regensburg}\,\;E-Mail: \texttt{aloisschiessl@posteo.de}}
\selectlanguage{ngerman}
\centerline{ }
\end{center}
\centerline{}
\begin{center}
\end{center}
\centerline{***********************}
\centerline{\it Zur Feier des Pi Day}
\centerline{\it 3 - 14 - 2026}
\centerline{***********************}
\begin{abstract}
In dieser Abhandlung stellen wir ein Fixpunktverfahren zur Berechnung der Kreiszahl $\pi$ auf Basis des sinus vor. Es sei $P\in \mathbb{N}$. Wir definieren die Funktion:
\[
S\left(x\right)
=x+\sum_{k=1}^{P}\left(\prod_{\ell=1}^{k-1}\frac {2\,\ell-1}{2\,\ell}\right)\frac{\sin\left(x\right)^{2\,k-1}}{2\,k-1}\;.
\]
Für jeden Startwert $x_0$ hinreichend nahe bei $\pi$ konvergiert die Folge
\[x_{n+1}=x_n+S\left(x_{n}\right)\;;\,n=0,1,\ldots\]
gegen $\pi$ mit Konvergenzordnung genau $\left(2\,P+1\right)$. Anhand von praktischen Berechnungen zeigen wir die Effizienz des Verfahrens.
\end{abstract}
\setcounter{section}{0}
\section{Einführung und Hauptergebnis}
Wir beginnen mit der $\arcsin$ Potenzreihe \cite{HJ}
\[
\arcsin \left(t\right)
=t+\frac{1}{2}\cdot\frac{t^3}{3}+
\frac{1\cdot 3}{2\cdot 4}\cdot\frac{t^5}{5}+\ldots
=\sum_{k=1}^{\infty }\left(\prod_{\ell=1}^{k-1}\frac {2\,\ell-1}{2\,\ell}\right)\frac{\,t^{\,2\,k-1}}{2\,k-1}\,.
\]
Die Potenzreihe ist absolut konvergent für $\left|t\right|\le 1$. Wir verwenden nur die ersten $P$ Summanden: 
\[
t+\frac{1}{2}\cdot\frac{t^3}{3}+
\frac{1\cdot 3}{2\cdot 4}\cdot\frac{t^5}{5}+\ldots
+\left(\prod_{\ell=1}^{P-1}\frac {2\,\ell-1}{2\,\ell}\right)\frac{t^{2\,P-1}}{2\,P-1}
=\sum_{k=1}^{P}\left(\prod_{\ell=1}^{k-1}\frac {2\,\ell-1}{2\,\ell}\right)\frac{t^{2\,k-1}}{2\,k-1}\,.
\]
Jetzt substituieren wir $t=\sin\left(x\right)$ und erhalten die endliche Summe
\[
\sum_{k=1}^{P}\left(\prod_{\ell=1}^{k-1}\frac {2\,\ell-1}{2\,\ell}\right)\frac{\sin\left(x\right)^{2\,k-1}}{2\,k-1}\,.
\]
Diese Summe verwenden wir um eine Fixpunktfunktion zu definieren, die die Berechnung der Kreiszahl  $\pi$ mit jeder beliebigen ungeraden Konvergenzordnung ermöglicht.
$ \\ $
$ \\ $
Wir formulieren unser Theorem:
\begin{theorem}
$ \\ $
Es sei $P\in{\mathbb{N}}$. Wir definieren die Funktion
\[S: \mathbb{R} \to \mathbb{R}\;;\]
\[x\mapsto S\left(x\right)\;;\]
\begin{align*}
S\left(x\right)
=x+\sum_{k=1}^{P}\left(\prod_{\ell=1}^{k-1}\frac{2\,\ell-1}{2\,\ell}\right)\frac{\sin\left(x\right)^{2\,k-1}}{2\,k-1}\,.
\end{align*}
Es gelten die folgenden Aussagen:
\begin{itemize}
\item[(a)]
$S\left(x\right)$ ist eine Fixpunktfunktion mit $\pi$ als Fixpunkt. Das bedeutet  $S\left(\pi\right)=\pi\,.$
\item[(b)]
Für die Ableitungen im Fixpunkt $\pi$ gilt:
\begin{align*}
S^{\,\left(k\right)}\left(\pi\right)=\left\{
\begin{aligned}
0\qquad\qquad\qquad\;\; 1\leq &k\le 2\,P
\\
\\
\left(\prod_{\ell=1}^{P}\left(2\,\ell-1\right)\right)^{2}
\qquad\quad k=\,&2\,P+1
\\ 
\end{aligned} 
\right.
\end{align*}
\item[(c)] Es sei $x_0$ ein Startwert hinreichend nahe bei $\pi$. Dann konvergiert die Folge
\begin{align*}
x_{n+1}&=S\left(x_{n}\right)\,,\;n=0,1,\,\ldots
\end{align*}
gegen $\pi$ mit Konvergenzordnung genau $\left(2\,P+1\right)\,.$
\end{itemize}
\end{theorem}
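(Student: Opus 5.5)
The plan is to treat (a), (b), (c) in order, reusing the computations already done for the English version of the Theorem. Part (a) is immediate: every summand contains a positive power of $\sin(\pi)=0$, so $S(\pi)=\pi$. For (b), I would avoid differentiating the sum $2P+1$ times directly. Instead I would reduce everything to the closed form of the second derivative obtained above, namely
\[
S''(x)=-\left(2P-1\right)\left(\prod_{\ell=1}^{P-1}\frac{2\ell-1}{2\ell}\right)\sin\left(x\right)^{2P-1},
\]
together with $S'(\pi)=1+\cos(\pi)=0$. The reduction of $S''$ comes from the telescoping of $s_1-s_2$ after substituting $\cos^2=1-\sin^2$ and shifting the index. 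It reflects the fact that the truncated series $\sum_{k\le P}$ is a partial sum of $\arcsin$, whose derivative is $(1-t^2)^{-1/2}$.

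The key step, and the one I expect to need the most care, is computing the derivatives of $\sin(x)^{2P-1}$ at $\pi$ up to order $2P-1$. The Lagrange-remainder factorisation used earlier is awkward because $\xi$ depends on $x$, so the factor $v$ is not obviously differentiable. I would replace it with an exact smooth factorisation. Since $\sin(x)=-\sin(x-\pi)$, we can write
\[
\sin(x)=-(x-\pi)\,g(x),\qquad g(x)=\frac{\sin(x-\pi)}{x-\pi},\quad g(\pi)=1,
\]
where $g$ is entire, since its power series is $1-\frac{(x-\pi)^2}{6}+\ldots$. Hence
\[
\sin(x)^{2P-1}=-(x-\pi)^{2P-1}\,g(x)^{2P-1}.
\]
By the Leibniz rule, or simply by reading off the Taylor expansion at $\pi$, every derivative of order $m\le 2P-2$ vanishes at $\pi$. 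The derivative of order $2P-1$ equals $-(2P-1)!\,g(\pi)^{2P-1}=-(2P-1)!$.

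Consequently $S^{(k)}(\pi)=\bigl(S''\bigr)^{(k-2)}(\pi)=0$ for $2\le k\le 2P$, and
\[
S^{(2P+1)}(\pi)=(2P-1)\,(2P-1)!\prod_{\ell=1}^{P-1}\frac{2\ell-1}{2\ell}.
\]
Cancelling the even factors of $(2P-1)!$ against $\prod_{\ell=1}^{P-1}2\ell$ turns this into $\bigl(\prod_{\ell=1}^{P}(2\ell-1)\bigr)^2$. As a sanity check, for $P=1$ we get $S'''(\pi)=-\cos(\pi)=1$.

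For (c), I would first obtain local convergence. Since $S'(\pi)=0$ and $S'$ is continuous, there is a $\delta>0$ and an $L<1$ with $|S'|\le L$ on $U_\delta=[\pi-\delta,\pi+\delta]$. The mean value theorem then gives $|S(x)-\pi|\le L|x-\pi|$, so $S(U_\delta)\subseteq U_\delta$, and the Banach fixed point theorem stated above yields $x_n\to\pi$. For the order, Taylor's formula with remainder at $\pi$ together with (b) gives
\[
x_{n+1}-\pi=\frac{S^{(2P+1)}(\xi_n)}{(2P+1)!}\,(x_n-\pi)^{2P+1},
\]
with $\xi_n$ between $x_n$ and $\pi$. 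If some $x_n=\pi$, the sequence is constant from then on and there is nothing to prove. Otherwise, continuity of $S^{(2P+1)}$ gives
\[
\frac{x_{n+1}-\pi}{(x_n-\pi)^{2P+1}}\to\frac{S^{(2P+1)}(\pi)}{(2P+1)!}\neq 0,
\]
which shows that the order is exactly $2P+1$.
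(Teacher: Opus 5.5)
Your proposal is correct and follows the paper's proof essentially step for step. You use $S(\pi)=\pi$ and $S'(\pi)=0$ by inspection, the telescoped closed form of $S''$, factoring $(x-\pi)^{2P-1}$ out of $\sin(x)^{2P-1}$ with the Leibniz rule at $\pi$, cancellation of the even factors of $(2P-1)!$, and then the Banach fixed point theorem plus a Taylor remainder for (c). The one real difference is that your factor $g(x)=\sin(x-\pi)/(x-\pi)$ is, up to sign, the same function as the paper's $-1+\tfrac{\sin''(\xi)}{2}(x-\pi)$, but written so that its smoothness is evident; the paper writes it through the Lagrange point $\xi=\xi(x)$ (and even defines $v$ with $\sin''(x)$ in place of $\sin''(\xi)$), which leaves the differentiability needed for the Leibniz rule unjustified, while your mean-value argument for $S(U_\delta)\subseteq U_\delta$ and your handling of the case $x_n=\pi$ similarly tighten (c).
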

\section{Beweis des Theorem}
In diesem Kapitel weisen wir die Aussagen zum Theorem nach. Wir benötigen dazu nur elementare Kenntnisse der Algebra und der klassischen Analysis.
\subsection{Fixpunktbeweis}
Am einfachsten ist die Fixpunktaussage $S\left(\pi\right)=\pi$ zu belegen. Dazu brauchen wir nur $x=\pi$ in die Fixpunktfunktion einzusetzen und erhalten
\[
S\left(\pi\right)=\pi+\sum _{k=1}^{P}\left(\prod_{\ell=1}^{k-1}\frac {2\,\ell-1}{2\,\ell}\right)\underbrace{\frac{\sin\left(\pi\right)^{2\,k-1}}{2\,k-1}}_{=0}=\pi\,.
\]
Damit ist die Fixpunkteigenschaft bewiesen.
\subsection{Beweis der Ableitungen}
Als nächstes wenden wir uns den Ableitungen zu. Das ist etwas mehr Aufwand nötig.
\subsubsection{Erste Ableitung}
Wir differenzieren die Fixpunktfunktion nach allen Regeln der Kunst und erhalten:
\begin{align*}
S\,'\left(x\right)
&=\left(x+\sum _{k=1}^{P}\left(\prod_{\ell=1}^{k-1}\frac {2\,\ell-1}{2\,\ell}\right)\frac{\sin\left(x\right)^{2\,k-1}}{2\,k-1}\right)'
\\
&=1+\sum _{k=1}^{P}\left(\prod_{\ell=1}^{k-1}\frac {2\,\ell-1}{2\,\ell}\right)\,\sin\left(x\right)^{2\,k-2}
\cdot\cos\left(x\right)\,.
\end{align*}
Als nächstes berechnen wir den Wert der ersten Ableitung im Fixpunkt $x=\pi$. Dazu rechnen wir den ersten Summanden aus und lassen den Index der restlichen Summe bei zwei beginnen:
\begin{align*}
S\,'\left(x\right)=1
&+\sum _{k=1}^{1}\left(\prod_{\ell=1}^{k-1}\frac {2\,l-1}{2\,l}\right)\,\sin\left(x\right)^{2\,k-2}
\cdot\cos\left(x\right)
\\
&+\sum _{k=2}^{P}\left(\prod_{\ell=1}^{k-1}\frac {2\,l-1}{2\,l}\right)\,\sin\left(x\right)^{2\,k-2}
\cdot\cos\left(x\right)
\\
=1&+\cos\left(x\right)+\sum _{k=2}^{P}\left(\prod_{\ell=1}^{k-1}\frac {2\,l-1}{2\,l}\right)\,\sin\left(x\right)^{2\,k-2}
\cdot\cos\left(x\right)\,.
\end{align*}
Setzen wir jetzt $x=\pi$ ein, so erhalten wir:
\begin{align*}
S\,'\left(\pi\right)
=1&+\underbrace{\cos\left(\pi\right)}_{=-1}+\sum _{k=2}^{P}\left(\prod_{\ell=1}^{k-1}\frac {2\,l-1}{2\,l}\right)\,\underbrace{\sin\left(\pi\right)^{2\,k-2}}_{=0}\cdot\cos\left(x\right)
=1-1+0=0\,.
\end{align*}
Schließlich ergibt sich: $S\,'\left(\pi\right)=0\,.$ Wir benötigen dieses Ergebnis später.
\subsubsection{Zweite Ableitung}
Wir differenzieren die erste Ableitung erneut. Das ergibt dann:
\begin{align*}
S\,''\left(x\right)
=&\sum _{k=1}^{P}\left(2\,k-2\right)\left(\prod_{\ell=1}^{k-1}\frac {2\,\ell-1}{2\,\ell}\right)\,\sin\left(x\right)^{2\,k-3}
\cdot\cos\left(x\right)^2
\\
+&\sum_{k=1}^{P}\left(\prod_{\ell=1}^{k-1}\frac {2\,\ell-1}{2\,\ell}\right)\,\sin\left(x\right)^{2\,k-2}
\cdot\left(-\sin\left(x\right)\right)
\\
=&\sum _{k=1}^{P}\left(\prod_{\ell=1}^{k-1}\frac{2\,\ell-1}{2\,\ell}\right)\left(2\,k-2\right)\,\sin\left(x\right)^{2\,k-3}
\cdot\cos\left(x\right)^2
\\
-&\sum_{k=1}^{P}\left(\prod_{\ell=1}^{k-1}\frac {2\,\ell-1}{2\,\ell}\right)\,\sin\left(x\right)^{2\,k-1}\,.
\end{align*}
Wir ersetzen $\cos\left(x\right)^2$ durch $\bigl(1-\sin\left(x\right)^2\bigr)$, lösen die Klammer auf und vereinfachen:
\begin{align*}
S\,''\left(x\right)
=&\sum _{k=1}^{P}\left(\prod_{\ell=1}^{k-1}\frac{2\,\ell-1}{2\,\ell}\right)\left(2\,k-2\right)\,\sin\left(x\right)^{2\,k-3}
\cdot\left(1-\sin\left(x\right)^2\right)
\\
-&\sum _{k=1}^{P}\left(\prod_{\ell=1}^{k-1}\frac{2\,\ell-1}{2\,\ell}\right)\,\sin\left(x\right)^{2\,k-1}
\\
=&\sum _{k=1}^{P}\left(\prod_{\ell=1}^{k-1}\frac {2\,\ell-1}{2\,\ell}\right)
\left(2\,k-2\right)\,\sin\left(x\right)^{2\,k-3}
\\
-&\sum _{k=1}^{P}\left(\prod_{\ell=1}^{k-1}\frac {2\,\ell-1}{2\,\ell}\right)\left(2\,k-2\right)\,\sin\left(x\right)^{2\,k-1}
\\
-&\sum _{k=1}^{P}\left(\prod_{\ell=1}^{k-1}\frac {2\,\ell-1}{2\,\ell}\right)\,\sin\left(x\right)^{2\,k-1}\,.
\end{align*}
Die letzten beiden Summen können wir zusammenfassen:
\begin{align*}
S\,''\left(x\right)
&=\sum _{k=1}^{P}\left(\prod_{\ell=1}^{k-1}\frac {2\,\ell-1}{2\,\ell}\right)\left(2\,k-2\right)\,\sin\left(x\right)^{2\,k-3}
\\
&-\sum _{k=1}^{P}\left(\prod_{\ell=1}^{k-1}\frac {2\,\ell-1}{2\,\ell}\right)\left(2\,k-1\right)\,\sin\left(x\right)^{2\,k-1}\,.
\end{align*}
Um die Rechnerei transparenter zu gestalten bezeichnen wir die erste Summe mit
\[
s_1=\sum _{k=1}^{P}\left(\prod_{\ell=1}^{k-1}\frac {2\,\ell-1}{2\,\ell}\right)\left(2\,k-2\right)\,\sin\left(x\right)^{2\,k-3}
\]
und die zweite Summe mit
\[
s_2=\sum _{k=1}^{P}\left(\prod_{\ell=1}^{k-1}\frac {2\,\ell-1}{2\,\ell}\right)\left(2\,k-1\right)\,\sin\left(x\right)^{2\,k-1}\,.
\]
Die zweite Ableitung schreibt sich dann ganz einfach als
\[
S\,''\left(x\right)=s_1-s_2\,.
\]
Wir zerlegen $s_1$, in dem wir den ersten Term ausreichen und den Index ab 2 laufen lassen:
\begin{align*}
s_1&=\sum _{k=1}^{1}\left(\prod_{\ell=1}^{k-1}\frac {2\,\ell-1}{2\,\ell}\right)\left(2\,k-2\right)\,\sin\left(x\right)^{2\,k-3}
\\
&+\sum _{k=2}^{P}\left(2\,k-2\right)\left(\prod_{\ell=1}^{k-1}\frac {2\,\ell-1}{2\,\ell}\right)\,\sin\left(x\right)^{2\,k-3}\,.
\end{align*}
Der erste Term verschwindet und es bleibt nur die Summe
\[
s_1=\sum _{k=2}^{P}\left(\prod_{\ell=1}^{k-1}\frac {2\,\ell-1}{2\,\ell}\right)\left(2\,k-2\right)\,\sin\left(x\right)^{2\,k-3}\,.
\]
Als nächstes nehmen wir eine Indexverschiebung vor:
\[
s_1=\sum _{k=1}^{P-1}\left(\prod_{\ell=1}{k}\frac {2\,\ell-1}{2\,\ell}\right)\left(2\,k\right)\,\sin\left(x\right)^{2\,k-1}\,.
\]
Der Produktterm in der Summe kann wie folgt vereinfacht werden:
\begin{align*}
s_1
&=\sum _{k=1}^{P-1}
\left(\prod_{\ell=1}^{k-1}\frac {2\,\ell-1}{2\,\ell}\right)
\left(\prod_{l=k}^{k}\frac {2\,\ell-1}{2\,\ell}\right)
\left(2\,k\right)\,\sin\left(x\right)^{2\,k-1}
\\
&=\sum _{k=1}^{P-1}
\left(\prod_{\ell=1}^{k-1}\frac {2\,\ell-1}{2\,\ell}\right)
\left(\frac {2\,k-1}{2\,k}\right)\left(2\,k\right)\,\sin\left(x\right)^{2\,k-1}
\\
&=\sum _{k=1}^{P-1}
\left(\prod_{\ell=1}^{k-1}\frac {2\,\ell-1}{2\,\ell}\right)
\left(2\,k-1\right)\,\sin\left(x\right)^{2\,k-1}\,.
\end{align*}
Ähnlich verfahren wir mit $s_2$. Diesmal lassen wir die ersten $\left(P-1\right)$ Summanden stehen und rechnen den letzten Summanden aus: 
\begin{align*}
s_2
&=\sum _{k=1}^{P}\left(\prod_{\ell=1}^{k-1}\frac {2\,\ell-1}{2\,\ell}\right)\left(2\,k-1\right)\,\sin\left(x\right)^{2\,k-1}
\\
&=\sum _{k=1}^{P-1}\left(\prod_{\ell=1}^{k-1}\frac {2\,\ell-1}{2\,\ell}\right)\left(2\,k-1\right)\,\sin\left(x\right)^{2\,k-1}
\\
&+\sum _{k=P}^{P}\left(\prod_{\ell=1}^{k-1}\frac {2\,\ell-1}{2\,\ell}\right)\left(2\,k-1\right)\,\sin\left(x\right)^{2\,k-1}
\\
&=\sum _{k=1}^{P-1}\left(\prod_{\ell=1}^{k-1}\frac {2\,\ell-1}{2\,\ell}\right)\left(2\,k-1\right)\,\sin\left(x\right)^{2\,k-1}
\\
&+\left(2\,P-1\right)\left(\prod_{\ell=1}^{P-1}\frac {2\,\ell-1}{2\,\ell}\right)\,\sin\left(x\right)^{2\,P-1}\,.
\end{align*}
Bis jetzt haben wir erhalten:
\begin{align*}
s_1&=\sum _{k=1}^{P-1}\left(\prod_{\ell=1}^{k-1}\frac {2\,\ell-1}{2\,\ell}\right)\left(2\,k-1\right)\,\sin\left(x\right)^{2\,k-1}\;;
\\
s_2&=\sum _{k=1}^{P-1}\left(\prod_{\ell=1}^{k-1}\frac {2\,\ell-1}{2\,\ell}\right)\left(2\,k-1\right)\,\sin\left(x\right)^{2\,k-1}
\\
&+\left(2\,P-1\right)\left(\prod_{\ell=1}^{P-1}\frac {2\,\ell-1}{2\,\ell}\right)\,\sin\left(x\right)^{2\,P-1}\,.
\end{align*}
Jetzt brauchen wir nur noch einzusetzen:
\begin{align*}
S\,''\left(x\right)
=&s_1-s_2
\\
=&\sum _{k=1}^{P-1}\left(\prod_{\ell=1}^{k-1}\frac {2\,\ell-1}{2\,\ell}\right)\left(2\,k-1\right)\,\sin\left(x\right)^{2\,k-1}
\\
-&\sum _{k=1}^{P-1}\left(\prod_{\ell=1}^{k-1}\frac {2\,\ell-1}{2\,\ell}\right)\left(2\,k-1\right)\,\sin\left(x\right)^{2\,k-1}
\\
-&\left(2\,P-1\right)\left(\prod_{\ell=1}^{P-1}\frac {2\,\ell-1}{2\,\ell}\right)\,\sin\left(x\right)^{2\,P-1}\,.
\end{align*}
$\\ $
Die Summen verschwinden und für die zweite Ableitung bleibt der einfache Term:
\[
S\,''\left(x\right)=
-\left(2\,P-1\right)\left(\prod_{\ell=1}^{P-1}\frac {2\,\ell-1}{2\,\ell}\right)\,\sin\left(x\right)^{2\,P-1}\,.
\]
Setzen wir jetzt den Fixpunkt $x=\pi$ ein, ergibt das sofort $S\,''\left(\pi\right)=0\,.$
\subsubsection{Höhere Ableitungen}
Wir haben noch zu beweisen, dass für die höheren Ableitungen folgendes gilt:
\begin{align*}
S^{\,\left(k\right)}\left(\pi\right)=\left\{
\begin{aligned}
0\qquad\qquad\qquad\;\; 1\leq &k\le 2\,P
\\
\\
\left(\prod_{\ell=1}^{P}\left(2\,\ell-1\right)\right)^{2}
\qquad\quad k=&2\,P+1
\\ 
\end{aligned} 
\right.
\end{align*}
Dazu müssen wir $S\left(x\right)$ insgesamt $\left(2\,P+1\right)$ mal differenzieren
und dann $x=\pi$ einsetzen. Da die zweite Ableitung  $S\,''\left(x\right)$ bereits vorliegt, vereinfacht sich das Problem zu
\begin{align*}
\left.\bigl(S\left(x\right)\bigr)^{\,\left(2\,P+1\right)}\right|_{x=\pi}
&=\left.\bigl(S\,''\left(x\right)\bigr)^{\,\left(2\,P-1\right)}\right|_{x=\pi}
\\
&=\left.\left(-\left(2\,P-1\right)\left(\prod\limits_{l=1}^{P-1}{\frac{2l-1}{2\,\ell}}\right)\sin {{\left(x\right)}^{2\,P-1}}\right)^{\left(2\,P-1\right)}\right|_{x=\pi}
\\
&=-\left(2\,P-1\right)\left(\prod\limits_{l=1}^{P-1}{\frac{2l-1}{2\,\ell}}\right)\left.\left(\sin {{\left(x\right)}^{2\,P-1}}\right)^{\left(2\,P-1\right)}\right|_{x=\pi}\,.
\end{align*}
Die linke Seite wäre dann schon einmal richtig. Auf der rechten Seite benötigen wir im Fixpunkt $\pi$ die $\left(2\,P-1\right)$-te Ableitung der Potenz $\sin\left(x\right)^{2\,P-1}$. Dazu entwickeln wir $\sin\left(x\right)$ in eine Taylorreihe vom Grad $1$ um $\pi$ mit Lagrange-Restglied:
\[
\sin\left(x\right)
=\sin\left(\pi\right)
+\sin'\left(\pi\right)\left(x-\pi\right)
+\frac{\sin''\left(\xi\right)}{2}\left(x-\pi\right)^{2}\,,
\]
wobei $\xi$ irgendwo zwischen $x$ und $\pi$ liegt. Da $\sin\left(\pi\right)=0$ und $\sin'\left(\pi\right)=\cos\left(\pi\right)=-1$ gilt, erhalten wir
\[
\sin\left(x\right)
=-\left(x-\pi\right)+\frac{\sin''\left(\xi\right)}{2}\left(x-\pi\right)^{2}\,.
\]
Weiter können wir $\left(x-\pi\right)$ ausklammern und erhalten die einfache Darstellung:
\[
\sin\left(x\right)
=\left(x-\pi\right)\left(-1+\frac{\sin''\left(\xi\right)}{2}\left(x-\pi\right)\right)\,.
\]
Als nächstes erheben wir $\sin\left(x\right)$ in die $\left(2\,P-1\right)^{\,th}$ Potenz:
\[
\sin\left(x\right)^{2\,P-1}
=\left(x-\pi\right)^{2\,P-1}
\left(-1+\frac{\sin{\,''}\left(\xi\right)}{2}\left(x-\pi\right)\right)^{2\,P-1}\,.
\]
Die höheren Ableitungen berechnen wir mit der allgemeinen Leibniz-Regel \cite{HJ}, die besagt:
\\
Seien  $u\left(x\right)$ und $v\left(x\right)$ zwei über demselben Intervall $n$-mal differenzierbare Funktionen, dann ist das Produkt $u\left(x\right)\cdot v\left(x\right)$ ebenfalls $n$-mal differenzierbar und es gilt: 
\[
\bigl(u\left(x\right)\cdot v\left(x\right)\bigr)^{\left(n\right)}
=\sum\limits_{k=0}^{n}\binom{n}{k}\,u\left(x\right)^{\left(k\right)}\cdot v\left(x\right)^{\left(n-k\right)}
\]
Hierbei wollen wir unter $u\left(x\right)^{\left(0\right)}=u\left(x\right)$ und $v\left(x\right)^{\left(0\right)}=v\left(x\right)$ verstehen.
$\\ $
$\\ $
Angewandt auf unseren Fall bedeutet das:
\[
n=2\,P-1\;;
\]
\[
u\left(x\right)=\left(x-\pi\right)^{2\,P-1}\;;
\]
\[
v\left(x\right)=\left(-1+\frac{\sin''\left(x\right)}{2}\left(x-\pi\right)\right)^{2\,P-1}\,.
\]
Wir erhalten damit die Darstellung
\[
\sin\left(x\right)^{2\,P-1}=
\underbrace{{\left(x-\pi\right)}^{2\,P-1}}_{u\left(x\right)}
\cdot
\underbrace{{{\left(-1+\frac{\sin^{\,''}{{\left(\xi\right)}}}{2}\left(x-\pi\right)\right)}^{2\,P-1}}}_{v\left(x\right)}\,.
\]
Hierauf wenden wir die Leibniz-Regel. Das ergibt dann die Summe:
\begin{align*}
&\left(\sin\left(x\right)^{2\,P-1}\right)^{\left({2\,P-1}\right)}
\\
&=\sum\limits_{k=0}^{{2\,P-1}}\binom{{2\,P-1}}{k}
\,\left(\left(x-\pi\right)^{2\,P-1}\right)^{\left(k\right)}
\cdot
\left(\left(-1+\frac{\sin^{\,''}{{\left(\xi\right)}}}{2}\left(x-\pi\right)\right)^{{2\,P-1}}\right)^{\left({2\,P-k-1}\right)}\,.
\end{align*}
Wir sehen uns zuerst die Folge der Ableitungen an:
\[
u\left(x\right)^{\left(k\right)}
=\left(\left(x-\pi\right)^{{2\,P-1}}\right)^{\left\{k\right\}},k=0,1,\ldots,{2\,P-1}\,.
\]
Zur Berechnung der Ableitungen von
\[
\Bigl(\left(x-\pi\right)^{2\,P-1}\Bigr)^{\left(k\right)},\;k=0,1,\ldots,2\,P-1
\]
gibt es eine einfache Regel, die wir einer Formelsammlung entnehmen \cite{SO}:
\begin{align*}
\Bigl(\left(x-\pi\right)^{2\,P-1}\Bigr)^{\left(k\right)}
=\frac{\left(2\,P-1\right)\,!}{\left(2\,P-1-k\right)\,!}\left(x-\pi\right)^{2\,P-1-k}\,.
\end{align*}
$\\ $
Insbesondere stellen wir fest, dass für $k<2\,P-1$ in der Formel
\[
u\left(x\right)^{\left(k\right)}=
\left(\left(x-\pi\right)^{{2\,P-1}}\right)^{\left(k\right)}
\]
stets der Faktor $\left(x-\pi\right)$ auftritt und somit für $x=\pi$ die Terme zu Null werden. Nur im Fall $k=\left(2\,P-1\right)$ erhalten wir einen von Null verschiedenen Wert nämlich das Produkt
\[
{\left.u{{\left(x\right)}^{\left(2\,P-1\right)}}\right|}_{x=\pi}=\left(2\,P-1\right)\,!\,.
\]
Von der Summe
\[
\sum\limits_{k=0}^{{2\,P-1}}\binom{{2\,P-1}}{k}
\,\left(\left(x-\pi\right)^{2\,P-1}\right)^{\left(k\right)}
\cdot
\left(\left(-1+\frac{\sin^{\,''}{{\left(\xi\right)}}}{2}\left(x-\pi\right)\right)^{{2\,P-1}}\right)^{\left({2\,P-k-1}\right)}\,.
\]
bleibt somit für $x=\pi$ nur der Term  mit Index $k=\left(2\,P-1\right)$ übrig:
\begin{align*}
&{\left.{{\left(\sin{{\left(x\right)}^{2\,P-1}}\right)}^{\left(2\,P-1\right)}}\right|}_{x=\pi}
\\
&=
\underbrace{\sum\limits_{k={2\,P-1}}^{{2\,P-1}}\binom{{2\,P-1}}{{2\,P-1}}\left.\left(\left(x-\pi\right)^{2\,P-1}\right)^{\left(2\,P-1\right)}\right|_{x=\pi}}_{=\left(2\,P-1\right)\,!}
\cdot
\underbrace{\left.\left(\left(-1
+\frac{\sin''{\left(\xi\right)}}{2}\left(x-\pi\right)\right)^{2\,P-1}\right)\right|_{x=\pi}}_{=-1}\,.
\end{align*}
Wir erhalten das einfache Ergebnis
\[
{\left.{{\left(\sin{{\left(x\right)}^{2\,P-1}}\right)}^{\left(2\,P-1\right)}}\right|}_{x=\pi}
=-\left(2\,P-1\right)\,!\,.
\]
Unsere ursprüngliches Anliegen war die Berechnung von
\begin{align*}
\left.S^{\,\left(2\,P+1\right)}\left(x\right)\right|_{x=\pi}
=\left.\left(-\left(2\,P-1\right)\left(\prod\limits_{l=1}^{P-1}{\frac{2l-1}{2\,\ell}}\right)\sin {{\left(x\right)}^{2\,P-1}}\right)^{\left(2\,P-1\right)}\right|_{x=\pi}\,.
\end{align*}
Jetzt brauchen wir nur noch einzusetzen und erhalten:
\begin{align*}
S^{\,\left(2\,P+1\right)}\left(\pi\right)
&=-\left(2\,P-1\right)\left(\prod\limits_{\ell=1}^{P-1}{\frac{2\ell-1}{2\,\ell}}\right)
\bigl(-\left(2\,P-1\right)\,!\bigr)
\\
&=\left(2\,P-1\right)\cdot\left(2\,P-1\right)\,!\left(\prod\limits_{l=1}^{P-1}{\frac{2l-1}{2l}}\right)\,.
\end{align*}
Wir sind fast am Ziel. Wir nehmen noch eine Vereinfachung vor:
\begin{align*}
\left(2\,P-1\right)\cdot\left(2\,P-1\right)\,!\cdot
\left(\prod\limits_{\ell=1}^{P-1}{\frac{2\ell-1}{2\,\ell}}\right)
&=\frac{\left(2\,P-1\right)\,!}{\prod\limits_{\ell=1}^{P-1}\left({2\,\ell}\right)}\cdot
\left(2\,P-1\right)\cdot\prod\limits_{\ell=1}^{P-1}\left({2\ell-1}\right)
\\
&=\frac{\left(2\,P-1\right)\,!}{\prod\limits_{\ell=1}^{P-1}\left({2\,\ell}\right)}\cdot\prod\limits_{\ell=1}^{P}\left({2\ell-1}\right)\,.
\end{align*}
Als nächstes sehen wir und den Term
\[
\frac{\left(2\,P-1\right)\,!}{\prod\limits_{\ell=1}^{P-1}\left({2\,\ell}\right)}
\]
näher an. Da es sich um ein endliches Produkt handelt können wir die einzelnen Faktoren direkt hinschreiben:
\[
\frac{\left(2\,P-1\right)\,!}{\prod\limits_{\ell=1}^{P-1}\left({2\,\ell}\right)}
=\frac
{1\cdot 2\cdot 3\ldots\left({2\,P-2}\right)\left({2\,P-1}\right)}
{2\cdot 4\cdot 6\ldots\left(2\,P-2\right)\qquad\qquad}\,.
\]
Die geraden Faktoren kürzen sich heraus und es bleibt nur das Produkt der ungeraden Faktoren:
\[
\frac{\left(2\,P-1\right)\,!}{\prod\limits_{\ell=1}^{P-1}\left({2\,\ell}\right)}
=1\cdot 3\cdot 5\ldots\left({2\,P-3}\right)\left({2\,P-1}\right)
=\prod\limits_{\ell=1}^{P}\left({2\ell-1}\right)\,.
\]
Mit diesem Ergebnis gehen wir in
\[
S\left(\pi\right)^{\left(2\,P+1\right)}
=\frac{\left(2\,P-1\right)\,!}{\prod\limits_{\ell=1}^{P-1}\left({2\,\ell}\right)}\cdot\prod\limits_{\ell=1}^{P}\left({2\,\ell-1}\right)
=\left(\prod\limits_{\ell=1}^{P}\left({2\,\ell-1}\right)\right)\cdot
 \left(\prod\limits_{\ell=1}^{P}\left({2\,\ell-1}\right)\right)
\]
und erhalten schlussendlich
\begin{align*}
S^{\,\left(2\,P+1\right)}\left(\pi\right)
&=\frac{\left(2\,P-1\right)\,!}{\prod\limits_{\ell=1}^{P-1}\left({2\,\ell}\right)}\cdot\prod\limits_{\ell=1}^{P}\left({2\,\ell-1}\right)
=\left(\prod\limits_{\ell=1}^{P}\left({2\,\ell-1}\right)\right)
\cdot \left(\prod\limits_{\ell=1}^{P}\left({2\,\ell-1}\right)\right)
\\
&=\left(\prod\limits_{\ell=1}^{P}\left({2\,\ell-1}\right)\right)^2\,.
\end{align*}
Fassen wir alle Ergebnisse zusammen, so erhalten wir das gewünschte Ergebnis:
\begin{align*}
S^{\,\left(k\right)}\,{\left(\pi\right)}=\left\{
\begin{aligned}
0\qquad\qquad\qquad\;\; 1\le\,&k\leq 2\,P
\\
\\ 
\;\left(\prod\limits_{\ell=1}^{P}\left({2\ell-1}\right)\right)^2\,\;\qquad k = &2\,P+1 
\end{aligned} 
\right.
\end{align*}
Damit ist der Beweis für die Ableitungen abgeschlossen.
\subsection{Beweis der Konvergenz}
Wir führen den Beweis mit dem Fixpunktsatz von Banach und zwar in einer speziellen Form für stetig differenzierbare Abbildungen im ${\mathbb{R}}^{1}\,.$
$ \\ $
$ \\ $
\underline{Fixpunktsatz von Banach} \cite{WG}
\\
Es $U\subseteq R$ eine abgeschlossene Teilmenge; weiterhin sei $F:U\rightarrow R$ eine Abbildung mit folgenden Eigenschaften
\[
F\left(U\right)\subseteq U \quad (Selbstabbildung)
\]
Es existiere ein $0<L<1$ so dass gilt
\[
\left|F\,'\left(x\right)\right| \leq L\,,\; \text{for\,\;all} \;x\in U \quad (Kontraktionseigenschaft)
\]
Dann gilt:
\begin{itemize}
\item[(a)]
Es gibt genau einen Fixpunkt ${{x}^{*}}\in U$ von $F\,.$
\item[(b)]
Für jeden Startwert $x_0\in {U}$ konvergiert die Folge
$x_{n+1}=F\left(x_{{n}}\right)\;;\;\;n=0,1,\ldots$ gegen den Fixpunkt $x^*\,.$
\end{itemize}
$\\ $
\underline{Konvergenz-Beweis}
$\\ $
Wir haben bereits bewiesen, dass $S\left(\pi\right)=\pi$ und $S\,'\left(\pi\right)=0$ gilt. Dann existiert aus Stetigkeitsgründen eine Umgebung um den Fixpunkt $\pi$
\[U_{\delta}=\left\{x\in\mathbb{R}:\left|x-\pi\right|\leq\delta\right\}\]
in der gilt: $S\left(U_\delta\right)\subseteq U_\delta$ und $\left|S\,'\left(x\right)\right|<L<1$ für alle $x\in U_{\delta}$. Somit sind die Voraussetzungen an den Fixpunktsatz von Banach erfüllt und hieraus folgt die Konvergenz.
\subsection{Beweis der Konvergenzordnung}
Zuletzt haben wir noch zu beweisen, dass die Konvergenzordnung genau $\left(2\,P+1\right)$ beträgt. Wir greifen auf bewährtes zurück: Taylorreihen. Diesmal entwickeln wir $S\left(x\right)$ in eine Taylorreihe um den Fixpunkt $\pi:$
\[
S\left(x\right)=
S\left(\pi\right)
+S\,'\left(\pi\right)\left(x-\pi\right)
+\ldots
+S\,^{\,\left(2\,P\right)}\left(\pi\right)
\frac{\left(x-\pi\right)^{2\,P}}{\left(2\,P\right)\,!}
+S\,^{\,\left(2\,P+1\right)}\left(\xi\right)
\frac{\left(x-\pi\right)^{2\,P+1}}{\left(2\,P+1\right)\,!}\,,
\]
wobei $\xi$ irgendwo zwischen $x$ and $\pi$ liegt. Wir erinnern uns an die Fixpunkteigenschaft $S\left(\pi\right)=\pi$ and die Ableitungsergebnisse:
\begin{align*}
S^{\,\left(k\right)}\left(\pi\right)=\left\{
\begin{aligned}
0\qquad\qquad\qquad\;\; 1\leq &k\le 2\,P
\\
\\
\left(\prod_{\ell=1}^{P}\left(2\,\ell-1\right)\right)^{2}
\qquad\quad k=\,&2\,P+1
\\ 
\end{aligned} 
\right.
\end{align*}
Die Taylorreihe vereinfacht sich dadurch zu
\[
S\left(x\right)=
\underbrace{S\left(\pi\right)}_{=\pi}
+\underbrace{S\,'\left(\pi\right)\left(x-\pi\right)
+\ldots
+S\,^{\,\left(2\,P\right)}\left(\pi\right)
\frac{\left(x-\pi\right)^{2\,P}}{\left(2\,P\right)\,!}}_{=\,0}
+S\,^{\,\left(2\,P+1\right)}\left(\xi\right)
\frac{\left(x-\pi\right)^{2\,P+1}}{\left(2\,P+1\right)\,!}
\]
\[
=
\pi
+S\,^{\,\left(2\,P+1\right)}\left(\xi\right)
\frac{\left(x-\pi\right)^{2\,P+1}}{\left(2\,P+1\right)\,!}\,.
\]
Jetzt setzen wir $x=x_n$ hinreichend nahe bei $\pi$ ein und erhalten
\begin{align*}
S\left(x_{n}\right)=\pi+S\,^{\left(2\,P+1\right)}\left(\xi_{n}\right)
\frac{\left(x_{n}-\pi\right)^{2\,P+1}}{\left(2\,P+1\right)\,!}\,,
\end{align*}
mit einem $\xi_n$ zwischen $x_n$ und $\pi$. Da $S\left(x_n\right)=x_{n+1}$, ergibt sich
\begin{align*}
x_{n+1}=\pi
+S\,^{\left(2\,P+1\right)}\left(\xi_{n}\right)
\frac{\left(x_{n}-\pi\right)^{2\,P+1}}{\left(2\,P+1\right)\,!}\,.
\end{align*}
Als nächstes bringen wir $\pi$ auf die linke Seite und dividieren durch $\left(x_{n+1}-\pi\right)^{2\,P+1}$. Wir erhalten damit:
\[
\frac{x_{n+1}-\pi}{\left(x_{n}-\pi\right)^{2\,P+1}}=
\frac{S\,^{\left(2\,P+1\right)}\left(\xi_{n}\right)}{\left(2\,P+1\right)\,!}\,.
\]
Mit zunehmendem $n$ strebt $x_n\rightarrow \pi$ mit Konvergenzordnung $\left(2\,P+1\right)$. Da $\xi_n$ zwischen $x_n$ und $\pi$ eingesperrt ist, muss es notwendigerweise ebenfalls gegen $\pi$ streben. Wir erhalten:
\begin{align*}
\underset{n\to \infty}{\mathop{\lim }}\; \frac{x_{n+1}-\pi}{\left(x_{n}-\pi\right)^{2\,P+1}}
=\underset{n\to \infty}{\mathop{\lim }}\;
\frac{S\,^{\left(2\,P+1\right)}\left(\xi_{n}\right)}{\left(2\,P+1\right)\,!}
=
\frac{S\,^{\left(2\,P+1\right)}\left(\pi\right)}{\left(2\,P+1\right)\,!}\,.
\end{align*}
Nun haben wir bereits berechnet:
\[
S\,{^{\left({2\,P+1}\right)}\left(\pi\right)}
=\left(\prod\limits_{\ell=1}^{P}\left({2\,\ell-1}\right)\right)^2\,.
\]
Verwenden wir dies, so erhalten wir
\begin{align*}
\underset{n\to \infty}{\mathop{\lim }}\;\;
\frac{x_{n+1}-\pi}{\left(x_{n}-\pi\right)^{2\,P+1}}
=\frac{1}{\left(2\,P+1\right)\,!}\left(\prod\limits_{\ell=1}^{P}\left({2\,\ell-1}\right)\right)^2\,.
\end{align*}
Dies zeigt, dass die Konvergenzordnung genau $\left(2\,P+1\right)$ beträgt. Der auf der rechten Seite stehende Term
\[
\frac{1}{\left(2\,P+1\right)\,!}\left(\prod\limits_{\ell=1}^{P}\left({2\,\ell-1}\right)\right)^2
\]
wird als asymtotische Konvergenzrate bezeichnet.
\section{Praktische Vorgehensweise}
Wir geben $P\in\mathbb{N}$ vor und erhalten die Konvergenzordnung $\left(2\,P+1\right)$. Als nächstes bestimmen wir die Fixpunktfunktion
\[
S\left(x\right)=x+\sum _{k=1}^{P}\left(\left(\prod_{\ell=1}^{k-1}\frac {2\,\ell-1}{2\,\ell}\right)\frac{\sin\left(x\right)^{2\,k-1}}{2\,k-1}\right)\,.
\]
Als erstes stellen wir fest, dass sich die etwas kompliziert gebauten Koeffizienten während der gesamten Berechnung nicht ändern. Es ist deshalb zweckmäßig diese vorab zu bestimmen, und die einzelnen Elemente in einem Koeffizienten-Vektor abzuspeichern:
\[
c{\,_k}=\left(\prod_{\ell=1}^{k-1}\frac {2\,\ell-1}{2\,\ell}\right)\cdot\frac{1}{2\,k-1}\;;k=1\,,\ldots,\,{P}\,.
\]
Die Fixpunktfunktion vereinfacht sich dadurch zu
\[S\left(x\right)=\sum\limits_{k=1}^{P}c_{\,k}\,\sin\left(x\right)^{2\,k-1}.\]
Hiervon leiten wir die Iterations-Vorschrift ab:
\[x_{n+1}=x_n+\sum\limits_{k=1}^{P}c_{\,k}\,\sin\left(x_n\right)^{2\,k-1}\;;\;n=0,1,\ldots\]
Bei der praktischen Berechnung ist eine Indexverschiebung sinnvoll. Statt
\[x_{n+1}=x_n+\sum\limits_{k=1}^{P}c_{\,k}\,\sin\left(x_n\right)^{2\,k-1}\;;\;n=0,1,\ldots\]
schreiben wir
\[x_{n}=x_{n-1}+\sum\limits_{k=1}^{P}c_{\,k}\,\sin\left(x_{n-1}\right)^{2\,k-1}\;;\;n=1,2,\ldots\,.\]
Damit läuft der Index $n$ synchron mit der Anzahl der Iterationsschritte. Mit einem geeigneten Startwert $x_0$ berechnen wir die Folge:
\[\underline{step\;\;1}\]
\[x_{1}=x_{0}+\sum\limits_{k=1}^{P}c_{\,k}\,\sin\left(x_{0}\right)^{2\,k-1}\,;\]

\[\underline{step\;\;2}\]
\[x_{2}=x_{1}+\sum\limits_{k=1}^{P}c_{\,k}\,\sin\left(x_{1}\right)^{2\,k-1}\,;\]

\[\vdots\]
\[\underline{step\;\;n}\]
\[x_{n}=x_{n-1}+\sum\limits_{k=1}^{P}c_{\,k}\,\sin\left(x_{n-1}\right)^{2\,k-1}\,;\]
$ \\ $
Mit zunehmender Anzahl $n$ der Iterationsschritte strebt $x_n\rightarrow\pi$ mit Konvergenzordnung $\left(2\,P+1\right)$. Bei beliebig langer Rechenzeit und beliebig viel Speicherplatz können wir damit $\pi$ beliebig genau berechnen. Leider steht uns nichts dergleichen zur Verfügung. Wir müssen uns mit einer endlichen Anzahl von Iterationen und einem endlichen Wert für $\pi$ begnügen.
$ \\ $
Dazu geben wir $\epsilon>0$ vor und führen die Iteration solange aus bis erstmals gilt:
\[\left|x_n-x_{n-1}\right|<\epsilon\,.\]
\subsection{\texorpdfstring{Erste Berechnung von\;$\pi$}{}}
All Abbildungen und numerischen Berechnungen wurden mit Hilfe des Computer-Algebra-Systems \textbf{Maple 2025.2} erstellt und durchgeführt.
$ \\ $
$ \\ $
In der ersten $\pi$-Berechnung sei $P=1$. Das ergibt die Konvergenzordnung $\left(2\,P+1\right)=3$. Das heißt, die Anzahl der gültigen Stellen verdreifacht sich näherungsweise mit jedem weiterem Iterations-Schritt. Wir haben nur einen einzigen Koeffizienten $c_{\,1}=1\,.$ Die Fixpunktfunktion ist somit sehr einfach:
\[S\left(x\right)=x+\sin\left(x\right)\,.\]
Wir sehen uns die beteiligten Funktionen $\sin\left(x\right)$ und $S\left(x\right)$ zusammen mit der Geraden $y=x$ einmal in einer Abbildung an:
\begin{center}
\includegraphics[width=1.0\linewidth]{"fp_sin_P1.eps"}
\end{center}
\textbf{\underline{figure 1\,:}}$\;\;\sin\left(x\right),\;S\left(x\right)=x+\sin\left(x\right)$ and $y=x$
$\\ $
$\\ $
Aus $S\left(x\right)$ leiten wir die Iteration ab: \[x_{n}=x_{n-1}+\sin\left(x_{n-1}\right),.\]
Wir starten mit $x_0=3$ berechnen sieben Iterationschritte
\[\underline{Step\;\;1}\]
\[x_{{1}}=x_{{0}}+\sin\left(x_{{0}}\right)\]
\[x_{{1}}=3.141120008059867222100744802808110279847\]
\[\left|x_{{1}}-x_{{0}}\right|=0.1411200080598672221007448028081102798469\]
\[\underline{Step\;\;2}\]
\[x_{{2}}=x_{{1}}+\sin\left(x_{{1}}\right)\]
\[x_{{2}}=3.141592653572195558734888568140879746743\]
\[\left|x_{{2}}-x_{{1}}\right|=0.0004726455123283366341437653327694668960596\]
\[\underline{Step\;\;3}\]
\[x_{{3}}=x_{{2}}+\sin\left(x_{{2}}\right)\]
\[x_{{3}}=3.141592653589793238462643383279501975927\]
\[\left|x_{{3}}-x_{{2}}\right|={1.759767972775481513862222918415964000597\times10^{-11}}\]

\[\underline{Step\;\;4}\]
\[x_{{4}}=x_{{3}}+\sin\left(x_{{3}}\right)\]
\[x_{{4}}=3.141592653589793238462643383279502884197\]
\[\left|x_{{4}}-x_{{3}}\right|={9.082700169421541422061207647042849447509\times10^{-34}}\]

\[\underline{Step\;\;5}\]
\[x_{{5}}=x_{{4}}+\sin\left(x_{{4}}\right)\]
\[x_{{5}}=3.141592653589793238462643383279502884197\]
\[\left|x_{{5}}-x_{{4}}\right|={1.248802280614662060876298822692655898920\times10^{-100}}\]

\[\underline{Step\;\;6}\]
\[x_{{6}}=x_{{5}}+\sin\left(x_{{5}}\right)\]
\[x_{{6}}=3.141592653589793238462643383279502884197\]
\[\left|x_{{6}}-x_{{5}}\right|={3.245860113595057511363635505332694019725\times10^{-301}}\]

\[\underline{Step\;\;7}\]
\[x_{{7}}=x_{{6}}+\sin\left(x_{{6}}\right)\]
\[x_{{7}}=3.141592653589793238462643383279502884197\]
\[x_{{7}}-x_{{6}}={5.699518230086813078688904262462295801452\times10^{-903}}\]
$\\ $
Nach sieben Iterationen haben wir bereits einen auf über 900 Stellen genauen Wert für $\pi$ erhalten. An den negativen Exponenten der Differenzen können wir sehr schön die Konvergenzordnung $3$ ablesen. Der negative Exponent verdreifacht sich näherungsweise mit jedem weiteren Schritt.
\subsection{\texorpdfstring{Berechnung einer Million Stellen von\;$\pi$}{}}
Das nächste Beispiel ist etwas anspruchsvoller. Diesmal wollen wir $\pi$ auf mindestens eine Million Dezimalstellen genau berechnen. Das bedeutet $\epsilon=10^{-1000000}$.
Um die Fixpunktfunktion zur Berechnung von $\pi$ verwenden zu können, ist eine effiziente Berechnung von $\sin\left(x\right)$ erforderlich. Es gibt jede Menge Veröffentlichungen zu diesem Thema \cite{RB},\cite{FJ1},\cite{FJ2},\cite{HG},\cite{RBA}. Aus der Vielzahl der vorgeschlagenen Verfahren haben wir die folgenden drei ausgewählt:
$\\ $
$\\ $
\underline{\bf{1.\;Verwendung eines Startwertes, der schon sehr nahe bei $\pi$ liegt}}
$\\ $
We verwenden den Startwert \[x_0=3.14159265358979324.\] Dieser ist bereits auf 18 Dezimalstellen genau.
$ \\ $
$ \\ $
\underline{\bf{2.\;Erhöhung der Konvergenzordnung}}
$\\ $
Um sicher zustellen, dass die Konvergenzordnung voll zum Tragen kommt, wählen wir $P=4$. Das ergibt die Konvergenzordnung $2\cdot 4+1=9$. Das bedeutet, dass sich die Anzahl der gültigen Dezimalstellen mit jedem Schritt verneunfacht. Wir benötigen insgesamt 6 Iterationsschritte um eine Genauigkeit auf mindestens einer Million Stellen für $\pi$ zu erhalten.
$ \\ $
$ \\ $
\underline{\bf{3.\;Schritt haltend die Genauigkeit erhöhen}}
$\\ $
Die Fixpunktiteration
\[x_{n}=x_{n-1}+\sum\limits_{k=1}^{P}c_{\,k}\,\sin\left(x_{n-1}\right)^{2\,k-1}\]
ist selbst-korrigierend. Wir müssen nicht von Anfang an mit der Genauigkeit von einer Million Stellen rechnen. Wir können mit weitaus geringerer Genauigkeit loslegen.
$\\ $
$\\ $
Die Variable \textbf{Digits} gibt die Anzahl der Dezimalstellen an mit denen Maple floating-point Zahlen berechnet; gibt man zum Beispiel \textbf{Digits}=100 vor, so rechnet Maple mit 100 Stellen Genauigkeit. Im ersten Schritt benötigen wir eine Genauigkeit von $18\cdot 9=162$ Stellen, also ist \textbf{Digits}=162. Im 2. Schritt liegt die benötigte Genauigkeit bei $18\cdot 9^2=1458$ Stellen, also müssen wir \textbf{Digits}=1458 setzen. So geht es weiter bis zum fünften Schritt, wo Maple mit $18\cdot 9^5=1062882$ Dezimalstellen rechnet. Jetzt haben wir die vorgegebene Anzahl von mindestens einer Million Stellen für $\pi$ erreicht. Deshalb brauchen wir im letzten Schritt keine weitere Erhöhung der Stellenanzahl vornehmen.
$ \\ $
$ \\ $
Zuerst bestimmen wir die Koeffizienten
\[c_1=1\;;\;c_2=\frac{1}{6}\;;\;c_3=\frac{3}{40}\;;\;c_4=\frac{5}{112}\]
und die Fixpunktfunktion:
\[
S\left(x\right)
=x+\sin\left(x\right)
+\frac{1}{6}\,\sin\left(x\right)^{3}
+\frac{3}{40}\,\sin\left(x\right)^{5}
+\frac{5}{112}\,\sin\left(x\right)^{7}\,.
\]
Hiervon leiten wir die Iterationsformel ab:
\[x_{n}=x_{n-1}+\sin\left(x_{n-1}\right)
+\frac{1}{6}\sin\left(x_{n-1}\right)^{3}
+\frac{3}{40}\,\sin\left(x_{n-1}\right)^{5}
+\frac{5}{112}\,\sin\left(x_{n-1}\right)^{7}\;.\]
$ \\ $
Bevor wir mit der Berechnung beginnen sehen wir uns die beteiligten Funktionen in einer weiteren Abbildung an:
\begin{center}
\includegraphics[width=0.85\linewidth]{"fp_sin_P4.eps"}
\end{center}
%\color{black}
%\color{Sepia}
\textbf{\underline{figure 2\,:\;\;}}
$\sin\left(x\right),\;$ $S\left(x\right)
=x+\sin\left(x\right)
+\frac{1}{6}\,\sin\left(x\right)^{3}
+\frac{3}{40}\,\sin\left(x\right)^{5}
+\frac{5}{112}\,\sin\left(x\right)^{7}$ und $y=x$
$ \\ $
$ \\ $
In der nächsten $\pi$ Berechnung haben wir die drei oben beschriebenen Optimierungsverfahren eingebaut. Wir starten mit \[x_0=3.14159265358979324\] und berechnen 6 Iterationen. Zusätzlich geben wir für jeden Schritt die Anzahl der \textbf{Digits} an, mit denen Maple gerade rechnet.

\[\underline{step\;\;1}\]
\[Digits=18\cdot 9=162\]
\[x_{{1}}=x_{{0}}+\sin\left(x_{{0}}\right)+{\frac{\sin\left(x_{{0}}\right)^{3}}{6}}+{\frac{3\,\sin\left(x_{{0}}\right)^{5}}{40}}+{\frac{5\,\sin\left(x_{{0}}\right)^{7}}{112}}\]
\[x_{{1}}=3.141592653589793238462643383279502884197\]
\[\left|x_{{1}}-x_{{0}}\right|={1.537356616720497115802830600624894179025\times10^{-18}}\]

\[\underline{step\;\;2}\]
\[Digits=18\cdot 9^2=1458\]
\[x_{{2}}=x_{{1}}+\sin\left(x_{{1}}\right)+{\frac{\sin\left(x_{{1}}\right)^{3}}{6}}+{\frac{3\,\sin\left(x_{{1}}\right)^{5}}{40}}+{\frac{5\,\sin\left(x_{{1}}\right)^{7}}{112}}\]
\[x_{{2}}=3.141592653589793238462643383279502884197\]
\[\left|x_{{2}}-x_{{1}}\right|={5.897298061478894440355377051045069618036\times10^{-162}}\]
\[\underline{step\;\;3}\]
\[Digits=18\cdot 9^3=13122\]
\[x_{{3}}=x_{{2}}+\sin\left(x_{{2}}\right)+{\frac{\sin\left(x_{{2}}\right)^{3}}{6}}+{\frac{3\,\sin\left(x_{{2}}\right)^{5}}{40}}+{\frac{5\,\sin\left(x_{{2}}\right)^{7}}{112}}\]
\[x_{{3}}=3.141592653589793238462643383279502884197\]
\[\left|x_{{3}}-x_{{2}}\right|={2.621201614945054114130730043090727892025\times10^{-1453}}\]
\[\underline{step\;\;4}\]
\[Digits=18\cdot 9^4=118098\]
\[x_{{4}}=x_{{3}}+\sin\left(x_{{3}}\right)+{\frac{\sin\left(x_{{3}}\right)^{3}}{6}}+{\frac{3\,\sin\left(x_{{3}}\right)^{5}}{40}}+{\frac{5\,\sin\left(x_{{3}}\right)^{7}}{112}}\]
\[x_{{4}}=3.141592653589793238462643383279502884197\]
\[\left|x_{{4}}-x_{{3}}\right|={1.774677351983440263736354294020071931434\times10^{-13075}}\]
\[\underline{step\;\;5}\]
\[Digits=18\cdot 9^5=1062882\]
\[x_{{5}}=x_{{4}}+\sin\left(x_{{4}}\right)+{\frac{\sin\left(x_{{4}}\right)^{3}}{6}}+{\frac{3\,\sin\left(x_{{4}}\right)^{5}}{40}}+{\frac{5\,\sin\left(x_{{4}}\right)^{7}}{112}}\]
\[x_{{5}}=3.141592653589793238462643383279502884197\]
\[\left|x_{{5}}-x_{{4}}\right|={5.305059119433192986813252324813505545638\times10^{-117675}}\]
\[\underline{step\;\;6}\]
\[Digits=1062882\]
\[x_{{6}}=x_{{5}}+\sin\left(x_{{5}}\right)+{\frac{\sin\left(x_{{5}}\right)^{3}}{6}}+{\frac{3\,\sin\left(x_{{5}}\right)^{5}}{40}}+{\frac{5\,\sin\left(x_{{5}}\right)^{7}}{112}}\]
\[x_{{6}}=3.141592653589793238462643383279502884197\]
\[\left|x_{{6}}-x_{{5}}\right|={1.011178012461738293576866860651780969327\times10^{-1059070}}\]
$\\ $
Nach sechs Iterationsschritten haben wir weit über eine  Million Dezimalstellen für $\pi$ erhalten. Genau genommen ergab sich für $\pi$ eine Genauigkeit in der Größenordnung von 1059070 Dezimalstellen. An den negativen Potenzen der Differenzen können wir die Konvergenzordnung $9$ ablesen. Sie nehmen näherungsweise um den Faktor 9 zu.
$\\ $
$\\ $
Alle Berechnungen wurden auf einem Eigenbau-PC mit folgender Hardware ausgeführt: Motherboard ASUS PRIME A320M-K mit CPU AMD Ryzen 5 5600G 6 CORE 3.90-4.40 GHz und 32 GB RAM. Die verwendete Software war MAPLE 2025.2 von  Maplesoft, Waterloo Maple Inc. Die Software wurde ebenfalls für die Konvertierung der mathematischen Terme nach LaTeX verwendet.

\end{document}